\documentclass[11pt,a4paper,oneside,english]{amsart}

\usepackage{geometry}
\usepackage{xcolor}  % 引入xcolor包，使超链接有颜色

\usepackage{lineno}
\usepackage[english]{babel}
\usepackage{amssymb,enumerate,bbm,amsmath}
\usepackage{tikz}
\usepackage{indentfirst}
\numberwithin{equation}{section}
\newtheorem{theorem}{Theorem}[section]

\newtheorem{lemma}[theorem]{Lemma}

\newtheorem{conjecture}[theorem]{Conjecture}
\newtheorem{remark}[theorem]{Remark}

\usepackage{hyperref} %生成引用链接，注：该宏包可能与其他宏包冲突，故放在所有引用的宏包之后
\usepackage{cleveref} %实现图片和表格、公式的引用，cleveref包必须放在hyperref包之后

\usepackage{verbatim}
\usepackage{color}
\hypersetup{colorlinks,linkcolor={blue},citecolor={blue},urlcolor={blue}}
\allowdisplaybreaks %允许长公式换行
\begin{document}

\title[]{Classification of closed minimal hypersurfaces with constant scalar curvature in $\mathbb{S}^5$}

\author{Chengchao He}
\address{Center of Mathematical Sciences, Zhejiang University, Hangzhou,
310027, People's Republic of China}
\email{22335022@zju.edu.cn}

\author{Hongwei Xu}
\address{Center of Mathematical Sciences, Zhejiang University, Hangzhou,
310027, People's Republic of China}
\email{xuhw@zju.edu.cn}

\author{Entao Zhao}
\address{Center of Mathematical Sciences, Zhejiang University, Hangzhou,
310027, People's Republic of China}
\email{zhaoet@zju.edu.cn}

\keywords{Minimal hypersurfaces; constant scalar curvature; constant $3$-th mean curvature; Chern's conjecture}

\begin{abstract}
      In this paper, we prove that any closed minimal hypersurface $M^4$ in the $5$-dimensional unit sphere $\mathbb{S}^5$ with constant scalar curvature and constant $3$-th mean curvature must be isoparametric. To be precise, $M^4$ is either an equatorial 4-sphere, a product of spheres $\mathbb{S} ^{2}(\frac{\sqrt{2}}{2}) \times \mathbb{S} ^{2}(\frac{\sqrt{2}}{2})$ or $\mathbb{S} ^{1}(\frac{1}{2}) \times \mathbb{S} ^{3}(\frac{\sqrt{3}}{2})$,  or a Cartan's minimal hypersurface. In particular, the value of the squared norm of the second fundamental form $S$ can only be 0, 4, or 12. This result strongly supports Chern's conjecture.
\end{abstract}

{\maketitle}

\tableofcontents

\section{Introduction}

In 1968, Simons {\cite{J.Simons1968}} proved a famous rigidity theorem concerning closed minimal hypersurfaces in the unit sphere.

\begin{theorem}  \label{thm1.1}
    \textup{(J.Simons {\cite{J.Simons1968}})} Let \( M^n \) be an \( n \)-dimensional closed minimal hypersurface in the unit sphere \( \mathbb{S}^{n+1} \). Denote by \( S \) the squared length of the second fundamental form of \( M^n \). Then the following inequality holds:
    \begin{equation*}
        \int_{M^n} S \left( S - n \right) dM \geqslant 0.
    \end{equation*}
    
    In particular, if \( 0 \leqslant S \leqslant n \), then either $S\equiv0$ and $M^n$ is the totally geodesic sphere in \( \mathbb{S}^{n+1} \), or \(S\equiv n\).
\end{theorem}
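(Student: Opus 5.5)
The plan is to derive Simons' identity for the Laplacian of $S$ and integrate it over $M^n$. Write the second fundamental form as $h_{ij}$ in a local orthonormal frame, with $H=\sum_i h_{ii}=0$ and $S=\sum_{i,j}h_{ij}^2$. The Codazzi equation says $h_{ijk}$ is symmetric in all indices. The Ricci identity for commuting covariant derivatives of $h_{ij}$ uses the Gauss equation $R_{ijkl}=(\delta_{ik}\delta_{jl}-\delta_{il}\delta_{jk})+h_{ik}h_{jl}-h_{il}h_{jk}$. Together with minimality these give
\begin{equation*}
\Delta h_{ij} = \sum_k h_{ijkk} = (n-S)\,h_{ij}.
\end{equation*}

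Contracting with $h_{ij}$ yields
\begin{equation*}
\tfrac12\Delta S = \sum_{i,j,k} h_{ijk}^2 + \sum_{i,j} h_{ij}\Delta h_{ij} = |\nabla h|^2 + S(n-S).
\end{equation*}
The key computational step is checking the curvature terms in the Ricci identity. Choose the frame so that $h_{ij}=\lambda_i\delta_{ij}$ at the point. The curvature contribution becomes
\begin{equation*}
\tfrac12\sum_{i,j} R_{ijij}(\lambda_i-\lambda_j)^2 = \tfrac12\sum_{i,j}(1+\lambda_i\lambda_j)(\lambda_i-\lambda_j)^2 .
\end{equation*}
Expanding with $\sum_i\lambda_i=0$ and $\sum_i\lambda_i^2=S$, this equals $nS-S^2$. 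This is the only place the computation needs care. The point is that the cubic terms $\sum_i\lambda_i^3$ cancel exactly, which is what makes the hypersurface case clean.

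Integrate over the closed manifold $M^n$ using the divergence theorem, so that $\int_{M^n}\Delta S\,dM=0$. This gives
\begin{equation*}
\int_{M^n} S(S-n)\,dM = \int_{M^n}|\nabla h|^2\,dM \geqslant 0,
\end{equation*}
which is the stated inequality.

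For the pinching statement, assume $0\leqslant S\leqslant n$. Then the integrand $S(S-n)$ is pointwise $\leqslant 0$, yet its integral is $\geqslant 0$. Hence $S(S-n)\equiv 0$ and $\nabla h\equiv 0$. Since $S$ is continuous and $M^n$ is connected, $S\equiv 0$ or $S\equiv n$. If $S\equiv 0$, then $M^n$ is totally geodesic. A closed connected totally geodesic hypersurface of $\mathbb{S}^{n+1}$ is an equatorial great sphere.
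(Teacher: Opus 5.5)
Your proof is correct. It is the standard Simons argument: $\Delta h_{ij}=(n-S)h_{ij}$, hence $\frac12\Delta S=|\nabla h|^2+S(n-S)$, integrated over the closed manifold. Your curvature check $\frac12\sum_{i,j}(1+\lambda_i\lambda_j)(\lambda_i-\lambda_j)^2=nS-S^2$ is right; strictly, the terms $\sum_{i,j}\lambda_i^3\lambda_j=f_3\sum_j\lambda_j$ vanish by minimality rather than cancel. You also rightly make connectedness explicit, since without it $S$ could be $0$ on one component and $n$ on another.

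The paper does not reprove Theorem~\ref{thm1.1}; it cites Simons. Its preliminary formula $\sum_{i,j,k}h_{ijk}^2=S(S-n)$ is exactly your identity specialized to constant $S$, so your route is the one the paper implicitly relies on.
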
 

 Later, Chern-do Cramo-Kobayashi {\cite{Chern}} and Lawson {\cite{Lawson}} independently proved that if \(S\equiv n\), then $M^n$ is one of the Clifford minimal hypersurfaces \( \mathbb{S}^k\Big(\sqrt{\frac{k}{n}}\Big) \times \mathbb{S}^{n-k}\Big(\sqrt{\frac{n-k}{n}}\Big) \) in \( \mathbb{S}^{n+1} \), \( 1 \leqslant k \leqslant n-1 \). 
Based on these works, Chern {\cite{Chern-lecture,Chern}} proposed the following famous conjecture.
\begin{conjecture} \label{Chern's Conjecture}
    \textup{(Chern's Conjecture)} Let \(M^{n}\) be a closed immersed minimal hypersurface of the unit sphere \(\mathbb{S}^{n+1}\) with constant scalar curvature. Then for each \(n\), the set of all possible values for $S$ should be discrete.
\end{conjecture}

\begin{remark} 
    Theorem \ref{thm1.1} gave the first gap of $S$ for  Conjecture \ref{Chern's Conjecture}.
    By the Gauss equation, for minimal hypersurfaces $S\equiv{\rm constant}$ if and only if scalar curvature $R\equiv{\rm constant}$. 
    Consequently, Conjecture \ref{Chern's Conjecture} is equivalent to the statement that the possible values of $R$ form a discrete set. 
\end{remark}

The significance of Chern's conjecture has been highly recognized in a series of notable works. 
In 1982, Yau {\cite{Yau}} listed the aforementioned Chern's conjecture as one of the 120 unsolved world-famous geometric problems. 
In 1983, Peng-Terng {\cite{Peng & Terng1,Peng & Terng2}} made the first breakthrough towards Chern's conjecture, proving a second gap theorem that if $S > n$, then $S > n + \frac{1}{12n}$. 
Moreover, for $n = 3$, they proved that $S \geqslant 6$ provided $S > 3$. 
In 1993, Chang {\cite{Chang1}} completed the proof of Chern's conjecture for $n = 3$. 
Furthermore, based on progress on the conjecture made by Almeida-Brito {\cite{de Almeida}}, Chang {\cite{Chang2}} proved that the closed hypersurface $M^3$ of $\mathbb{S}^4$ with constant mean curvature and constant scalar curvature is also isoparametric. 
For high dimensions, there have been several estimates on the second gap, see e.g. 
Yang-Cheng {\cite{Yang & Cheng 1, Yang & Cheng 2,Yang & Cheng 3}}  and Suh-Yang {\cite{Suh & Yang}}. Up to now, the second gap has been improved to $\frac{3}{7}n$, which is obtained in {\cite{Suh & Yang}}. The following problem is still open.

\bigskip

\noindent\textbf{Second gap problem.} Let $M^{n}(n>3)$ be a closed minimal hypersurface in $\mathbb{S}^{n+1}$ with constant scalar curvature. Assume $S>n$. Does there hold $S\geqslant 2n$?

\bigskip

Currently, all known closed minimal hypersurfaces with constant scalar curvature in the sphere are isoparametric minimal hypersurfaces, which have constant principal curvatures. In 1986, Verstraelen et al. {\cite{Scherfner,Verstraelen}} proposed the following stronger version of Chern's conjecture.

\begin{conjecture}
    Let \(M^{n}\) be a closed immersed minimal hypersurface of the unit sphere \(\mathbb{S}^{n+1}\) with constant scalar curvature. Then \(M^{n}\) is isoparametric.
\end{conjecture}

As was mentioned, the conjecture is true for $n=3$ based on the work of Chang {\cite{Chang1}} and Peng-Terng {\cite{Peng & Terng1}}. For the case $n=4$, building on the results of Lusala-Scherfner-Sousa {\cite{Lusala 1,Lusala 2}}, Deng-Gu-Wei {\cite{Deng}} proved the following theorem.

\begin{theorem}  \label{thm-Deng}
    \textup{(Deng-Gu-Wei {\cite{Deng}})} Any closed minimal Willmore hypersurface $M^4$ in $\mathbb{S}^5$ with constant scalar
    curvature must be isoparametric. To be precise, $M^4$ is either an equatorial $4$-sphere, a product of spheres $\mathbb{S} ^{2}(\frac{\sqrt{2}}{2}) \times \mathbb{S} ^{2}(\frac{\sqrt{2}}{2})$ or a Cartan's minimal hypersurface.
    In particular, $S$ can only be $0,\ 4,\ 12$.
\end{theorem}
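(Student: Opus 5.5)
The plan is to follow the approach of Deng--Gu--Wei. For a closed minimal hypersurface $M^4\subset\mathbb{S}^5$, write $f_k=\sum_i\lambda_i^k$ for the power sums of the principal curvatures. Minimality gives $f_1=0$, and constant scalar curvature gives $S=f_2$ constant by the Gauss equation. The Willmore condition for minimal hypersurfaces in $\mathbb{S}^5$ is equivalent to $f_3\equiv 0$. Since $f_1=0$ and $f_2$ is constant, the characteristic polynomial of the shape operator is determined by $f_3$ and $f_4$. The first goal is therefore to show that $f_4$ (equivalently $|\nabla h|^2$) is constant; the rigidity results of Lusala--Scherfner--Sousa then force $M$ to be isoparametric.

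The first step is to use Simons' identity. Since $S$ is constant, $\frac12\Delta S=0$ yields
\[
|\nabla h|^2=S(S-4),
\]
so $|\nabla h|^2$ is constant. Next, I would compute $\Delta f_3$ and $\Delta f_4$ via the Codazzi equations and the Ricci identities for $h_{ijk}$. Writing $\Delta f_3=0$ in a principal frame gives the algebraic relation
\[
\sum_{i,j,k}\lambda_i h_{ijk}^2 = -\tfrac{1}{2}\,(\text{polynomial in }\lambda\text{'s}),
\]
more precisely $6\sum_{i,j,k}\lambda_i h_{ijk}^2=3(S-4)f_3$, which vanishes.

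Second, I would take covariant derivatives of the constraints $\sum_i h_{iik}=0$, $\sum_{i,j}h_{ij}h_{ijk}=0$ and $\sum_{i,j,l}h_{ij}h_{jl}h_{lik}=0$. In a frame with $\lambda_1,\dots,\lambda_4$, these give three linear relations, for each $k$, among the four quantities $h_{iik}$. When the $\lambda_i$ are distinct, this determines $h_{iik}$ up to $h_{ijk}$ with $i,j,k$ distinct. Then I would integrate the Simons-type formula for $\frac12\Delta|\nabla h|^2$. It involves $|\nabla^2h|^2$ and cubic/quartic terms in the $\lambda_i$ paired with $h_{ijk}^2$, with the integral of the Laplacian equal to zero. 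Using the linear relations, I would bound $|\nabla^2 h|^2$ from below by second derivatives of the relations above, in the style of Peng--Terng and Chang. The aim is the integral inequality
\[
\int_M (S-4)(S-12)\,F\,dM\ \le 0
\]
with $F\ge0$, and with equality forcing $\nabla f_4=0$.

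This step is the main obstacle. In dimension $4$ there are many components $h_{ijk}$, and points where principal curvatures coincide need separate handling, e.g.\ by an open-dense argument on the set where the multiplicities are locally constant. I would treat the case of four distinct principal curvatures with explicit coordinates, and the cases with multiplicities using the Codazzi structure of the eigendistributions.

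Finally, once $f_4$ is constant, all principal curvatures are constant, so $M$ is isoparametric with $g\in\{1,2,3\}$ distinct principal curvatures. Münzner's classification of minimal isoparametric hypersurfaces in $\mathbb{S}^5$ then gives the equatorial sphere, the Clifford products, and Cartan's hypersurface ($g=3$). Among the Clifford products, $f_3=0$ excludes $\mathbb{S}^1\times\mathbb{S}^3$, which has $f_3\neq0$. This leaves exactly the list in Theorem \ref{thm-Deng}, with $S=0,4,12$.
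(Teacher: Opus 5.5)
The paper does not prove this statement. It quotes it from Deng--Gu--Wei and uses it as a black box, in Case I of Section 3 and in the last step of Section 4. Judged on its own, your proposal has a genuine gap exactly at the step you call the main obstacle: the integral inequality is only announced, and even its intended shape cannot give the theorem. Since $S$ is constant, $\int_M (S-4)(S-12)F\,dM=(S-4)(S-12)\int_M F\,dM$. For $F\geq 0$ this is automatically $\leq 0$ when $4<S<12$, and no equality is forced, so nothing in your argument excludes $S\in(4,12)$. Whatever sign you end up with, a single inequality of this form with $F\geq 0$ can dispose of at most one of the intervals $(4,12)$ and $(12,\infty)$. The other interval needs a separate input; for instance, the Tang--Yan theorem quoted in the introduction covers $R=12-S\geq 0$. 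Also, because $|\nabla h|^2=S(S-4)$ is constant, $\Delta|\nabla h|^2=0$ holds pointwise and just gives $|\nabla^2h|^2=(S-11)S(S-4)+3S(f_4-S)$ when $f_3=0$. Integrating it adds nothing, so all the content would have to come from the unproved lower bound for $|\nabla^2h|^2$. Note too that it is $|\nabla^2 h|^2$, not $|\nabla h|^2$, whose constancy is equivalent to that of $f_4$.

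The second gap is the set where principal curvatures coincide, which you dismiss with ``an open-dense argument''. With $f_3=0$ the principal curvatures are $\pm\mu_1,\pm\mu_2$ with $\mu_1^2+\mu_2^2=S/2$ and $\mu_1^2\mu_2^2=K$. Hence $0\leq K\leq S^2/16$, and collisions happen exactly on $K^{-1}(0)$ and $K^{-1}(S^2/16)$. Your relations $\sum_i\lambda_i^m h_{iik}=0$, $m=0,1,2$, determine $h_{iik}$ up to $dK$, namely $h_{iik}=e_k(K)/\prod_{j\neq i}(\lambda_j-\lambda_i)$ as in \eqref{lambda-ij}; they do not determine it ``up to the $h_{ijk}$ with distinct indices''. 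Some of these coefficients blow up on those level sets. Any integral formula obtained on the set of distinct curvatures therefore carries boundary terms along level sets of $K$ near $0$ and $S^2/16$, and these must be shown to vanish; passing to an open dense subset does not do that. In Section 4 such terms are controlled by the cut-offs $\eta_\varepsilon\circ K$, Lemma \ref{lem1} and the de Almeida--Brito lemma. There, however, the integrand is Cheng--Li's $\Phi$ with $d\Phi=f_3(\sum_i c_ih_{44i}^2+1)\,\mathrm{vol}$, whose whole output is $f_3=0$. That is why the paper reduces to Deng--Gu--Wei rather than reproving it, and why you would need a different integrand.

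Smaller points:
\begin{itemize}
\item For minimal $M^4$, ``Willmore $\Leftrightarrow f_3=0$'' uses that $S$ is constant.
\item Once $f_4$ is constant the principal curvatures are constant, so Lusala--Scherfner--Sousa is not needed.
\item In $\mathbb{S}^5$, M\"unzner's restrictions allow only $g\in\{1,2,4\}$, since $g=3,6$ force $3\mid n$. So the minimal hypersurface with $S=12$ has $g=4$, multiplicities $(1,1)$ and principal curvatures $\pm(\sqrt{2}\pm1)$, not $g=3$.
\end{itemize}
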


By computation, Theorem \ref{thm-Deng} can be stated equivalently as follows: any closed minimal hypersurface $M^4$ in $\mathbb{S}^5$ with vanishing 3-th mean curvature $H_3$ and constant scalar curvature must be isoparametric. Additionally, Tang-Yang {\cite{Tang-Yang}} proved that if the number of distinct principal curvatures is constant, then any closed minimal hypersurface $M^4$ in $\mathbb{S}^5$ with constant 3-th mean curvature $H_3$ and nonnegative constant scalar curvature must be isoparametric. For an $n$-dimensional closed hypersurface $M^n$ in $\mathbb{S}^{n+1}$, set  $f_k=\sum_{i=1}^{n} \lambda_{i}^{k}$ for $k=1,2,\cdots,n-1$, where $\lambda_1, \lambda_2,\cdots,\lambda_n$ are principal curvatures. Tang-Yan {\cite{Tang-Yan}} proved the following theorem.

\begin{theorem}
    \textup{(Tang-Yan {\cite{Tang-Yan}})} Let $M^n(n \geqslant 4)$ be a closed hypersurface in the unit sphere $\mathbb{S}^{n+1}$. 
    If the scalar curvature is nonnegative and $f_k\equiv{\rm constant}$ for $k=1,2,\cdots,n-1$, then $M^n$ is isoparametric.
\end{theorem}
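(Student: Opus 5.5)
The strategy is to show that all principal curvatures of $M^n$ are constant. The hypotheses fix the power sums $f_1,\dots,f_{n-1}$, so by Newton's identities all elementary symmetric functions $\sigma_1,\dots,\sigma_{n-1}$ of the principal curvatures are constant. The principal curvatures are then the roots of
\[
P_x(t)=t^n-\sigma_1t^{n-1}+\cdots+(-1)^{n-1}\sigma_{n-1}t+(-1)^n\sigma_n(x),
\]
in which only the Gauss--Kronecker curvature $\sigma_n=\lambda_1\cdots\lambda_n$ can vary. Isoparametricity is therefore equivalent to $\sigma_n$ (equivalently $f_n$) being constant, and this is what I would aim to prove.

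\textbf{Step 1: use the hypotheses pointwise through the structure equations.} Work in a local frame with $h_{ij}=\lambda_i\delta_{ij}$. Each $f_k$ is constant, so
\[
\sum_i \lambda_i^{k-1}h_{iim}=0\quad\text{for all } m \text{ and } k=1,\dots,n-1 .
\]
At a point where the $\lambda_i$ are distinct, the Vandermonde matrix in $\lambda_1,\dots,\lambda_n$ shows that the vector $(h_{11m},\dots,h_{nnm})$ is proportional to the single vector $v_i=1/P'(\lambda_i)$. Since $dP_x(\lambda_i)+P'(\lambda_i)\,d\lambda_i=0$ and only the constant term varies, we get $d\lambda_i=\pm\, d\sigma_n/P'(\lambda_i)$. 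All principal curvatures thus move together, driven by the one function $\sigma_n$.

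\textbf{Step 2: derive a differential identity.} I would compute $\Delta f_n$, or better $\frac12\Delta S$ together with $\Delta f_k$ for $k\le n-1$ (Simons-type formulas), and use the Codazzi symmetry of $h_{ijk}$. From $\Delta f_k=0$ for $k=1,\dots,n-1$ we get
\[
k\sum_i\lambda_i^{k-1}h_{iill}+k(k-1)\sum_{i,j,l}\lambda_i^{k-2}h_{ijl}^2(\text{suitably symmetrized})=0 .
\]
The Ricci identity gives
\[
\sum_l h_{iill}=\lambda_i\bigl(n-S\bigr)+nH\lambda_i^2\ \text{-type terms}+\dots,
\]
so each relation becomes
\[
\sum(\text{quadratic in } h_{ijk})=\text{polynomial in constants and }\sigma_n .
\]
The aim is to integrate a combination over the closed manifold $M$ to obtain an identity of the form $\int_M \Phi\,|\nabla \sigma_n|^2=0$, or $\int_M(\sigma_n-c)^2\,\Psi=0$, with $\Phi,\Psi>0$. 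The nonnegative scalar curvature should supply the sign, in the same way that the Gauss equation $R=n(n-1)+n^2H^2-S$ enters Bochner-type arguments.

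\textbf{Step 3: the main obstacle.} The hard part is degenerate points, where principal curvatures coincide and the Vandermonde argument fails, and in particular the need to control the sign of the mixed terms $h_{ijk}^2$ with $i,j,k$ distinct. I would first try a Morse/maximum principle argument. At a maximum or minimum point of $\sigma_n$, the polynomial $P_x$ attains an extremal constant term; the multiplicities of its roots are constrained there, and the Hessian of $\sigma_n$ has a sign. Combining this with the Laplacian identity from Step 2 evaluated at those points, the goal is to show that $\max\sigma_n=\min\sigma_n$. If that fails, the fallback is to show that the set where the multiplicities are locally constant is open and dense, to apply Step 1 there, and to extend by continuity. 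On that set, the transnormality-type structure forced by $d\lambda_i\parallel d\sigma_n$ would be used, via Cartan's identity, to force constancy. The case $n\ge4$ presumably matters because the identities of Step 2 are then numerous enough to eliminate all the $h_{ijk}$ components.
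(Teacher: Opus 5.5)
You have a genuine gap. The paper does not prove this statement; it only quotes it from Tang--Yan. Judged on its own, your proposal is a programme: the step that carries the theorem, the integral identity in Step~2, is only hoped for, and the tools you name do not produce it. The relations from $\Delta f_k=0$ and from $\Delta f_n$ mix the diagonal components $h_{iim}$, which Step~1 ties to $\nabla\sigma_n$, with the components $h_{ijk}$ for $i,j,k$ distinct. Their weights, polynomials in the $\lambda_i$, have no sign in general. The one definite identity, $|\nabla h|^2=S(S-n)$ or its constant-$H$ analogue, has a constant right-hand side and says nothing about $\nabla\sigma_n$. You flag the mixed components as the obstacle yourself, but nothing in the outline controls them, and you exhibit no combination giving $\int_M w\,|\nabla\sigma_n|^2=0$ with $w>0$.

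Note also that $R=n(n-1)+2\sigma_2$ is constant under your hypotheses. So $R\geqslant0$ is merely an inequality among constants and cannot act as a pointwise curvature bound in a Bochner argument; it has to enter as the zeroth-order term of some identity. That is the missing idea, used for such results since de Almeida--Brito and replayed in Section~4 of this paper. On the set where the $\lambda_i$ are distinct, the principal frame is defined up to signs, and one builds an $(n-1)$-form $\Phi$ from the terms $\omega_{ij}\wedge\omega_1\wedge\cdots\wedge\widehat{\omega_i}\wedge\cdots\wedge\widehat{\omega_j}\wedge\cdots\wedge\omega_n$. The curvature part of $d\omega_{ij}$ then contributes $\sum_{i<j}R_{ijij}=\sum_{i<j}(1+\lambda_i\lambda_j)=R/2$ to $d\Phi$. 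With the paper's weights $\lambda_i+\lambda_j$, $n=4$ and $H=0$, it contributes $-f_3$ instead, which is where the paper's sign comes from. The relations $h_{iik}=\lambda_{ik}$ and $h_{ijk}=(\lambda_i-\lambda_j)\varGamma_{ijk}$ must then be used to handle the remaining terms, which are quadratic in the connection coefficients.

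Step~3 also misplaces the difficulty. With $\sigma_1,\dots,\sigma_{n-1}$ fixed, $P_x$ has a repeated root only when its constant term is an endpoint of the interval of values for which all roots are real. So degenerate points occur only on the extremal level sets of $K=\sigma_n$. They are not a negligible set to be discarded by density and continuity: they are exactly where $\Phi$ blows up, since $\varGamma_{ijk}=h_{ijk}/(\lambda_i-\lambda_j)$. Meanwhile, on the generic set Step~1 gives only $d\lambda_i\parallel d\sigma_n$, which forces nothing.

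What is needed is the cut-off argument:
\begin{enumerate}
\item apply Stokes to $(\eta_\varepsilon\circ K)\Phi$;
\item bound the flux term $dK\wedge\Phi=\sum_iu_iK_i^2\,\mathrm{vol}$ from the correct side near $K^{-1}(\min K)$ and $K^{-1}(\max K)$, as Lemma~\ref{lem1} does for $n=4$;
\item make it vanish in the limit via Lemma~\ref{lemma of de Almeida}.
\end{enumerate}
If there are no degenerate points, $\Phi$ is global and Stokes applies directly.

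Your alternatives do not replace this. At an extremum of $\sigma_n$, the relation from $\Delta f_n$ still contains the mixed $h_{ijk}$ with unsigned weights, so the sign of $\Delta\sigma_n$ gives no contradiction. Cartan's identity is derived assuming constant principal curvatures, so invoking it to force constancy is circular. And $d\lambda_i\parallel d\sigma_n$ does not make $\sigma_n$ transnormal. Finally, $n\geqslant4$ is assumed only because $n=3$ is de Almeida--Brito's theorem, not because more identities become available.
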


In particular, a closed minimal hypersurface $M^4$ in $\mathbb{S}^5$ with constant 3-th mean curvature and nonnegative constant scalar curvature is isoparametric (see also {\cite{Xiao Ling}}). Furthermore, if the number of distinct principal curvatures is constant, then it was proved by Cheng-Li {\cite{Tongzhu Li}} that it is not necessary to assume that $M^4$ has nonnegative scalar curvature. In fact, they proved the following theorem.

\begin{theorem}
    \textup{(Cheng-Li {\cite{Tongzhu Li}})} Let $X: M^4 \to \mathbb{S}^5$ be a closed minimal hypersurface with constant squared length of the second fundamental form $S$ in the $5$-dimensional sphere. If the $3$-mean curvature $H_3$ and the number of distinct principal curvatures are constant, then $M^4$ is isoparametric and $S$ can only be $0,\ 4,\ or\ 12$.
\end{theorem}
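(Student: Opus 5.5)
Since $M^4$ is minimal with constant $S$ and constant $H_3$, the polynomials $f_1=0$, $f_2=S$ and $f_3$ (which is a constant multiple of $H_3$) are all constant. The only quantity that can vary is $f_4$. The plan is to show that, under the additional hypothesis that the number $g$ of distinct principal curvatures is constant, the structure equations force $f_4$ to be constant. Then all $\lambda_i$ are constant, $M^4$ is isoparametric, and Münzner's classification gives $S\in\{0,4,12\}$.

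First I would fix local orthonormal frames diagonalizing $h_{ij}$. Constancy of $g$ makes the multiplicities locally constant, so the principal curvatures are smooth and the frames can be chosen smooth on an open dense set. Differentiating $f_1,f_2,f_3$ gives, for each $k$,
\[
\sum_i h_{iik}=0,\qquad \sum_i\lambda_i h_{iik}=0,\qquad \sum_i \lambda_i^2 h_{iik}=0 .
\]
I would then split into cases by $g$.

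\textbf{Cases $g\le 3$.}
\begin{itemize}
\item If $g=1$, then $M^4$ is totally geodesic.
\item If $g=2$, the Vandermonde system forces $h_{iik}=0$ with the multiplicities playing their usual role. Cartan's/Otsuki-type arguments then give a Clifford product, so $S=4$.
\item If $g=3$, minimality and constant $f_2,f_3$ determine the three values and their multiplicities $(m_1,m_2,m_3)$ algebraically from $f_1,f_2,f_3$. So the principal curvatures are constant, and Cartan's minimal hypersurface appears with $S=12$ (for multiplicities $(2,1,1)$ one checks consistency).
\end{itemize}

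\textbf{Case $g=4$.} This is the main obstacle, where all $\lambda_i$ are distinct. The three linear relations above make $(h_{11k},\dots,h_{44k})$ proportional to the vector $\bigl(1/\prod_{j\ne i}(\lambda_i-\lambda_j)\bigr)_i$. In particular, $h_{kkk}$ determines every $h_{iik}$. I would then use:
\begin{itemize}
\item the Codazzi relations $h_{ijk}(\lambda_i-\lambda_j)=\omega_{ij}(e_k)$-type formulas,
\item the Gauss equation for the curvature $R_{ijij}$ computed from connection forms,
\item Simons' identity $\tfrac12\Delta S=|\nabla h|^2+S(4-S)=0$, which gives $|\nabla h|^2=S(S-4)$,
\item the analogous Laplacian of $f_3$, which yields $\sum_{i,j,k}\lambda_i h_{ijk}^2 = \text{const}\cdot$ (algebraic in $\lambda$).
\end{itemize}
Together with the third-order identity obtained by differentiating $f_4$, these give a closed system for $f_4$. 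The aim is to show that $\Delta f_4$ and $|\nabla f_4|^2$ are polynomials in $f_4$ alone (with the constants $S,f_3$), so that $f_4$ is isoparametric-like as a function. Integrating $\Delta f_4$ over $M$ and evaluating at the max and min of $f_4$, where $\nabla f_4=0$, should then force $f_4$ to be constant. The hard point is controlling the mixed terms $h_{ijk}$ with distinct $i,j,k$ (only $h_{123},h_{124},h_{134},h_{234}$ here). At critical points of $f_4$ all $h_{iik}$ vanish, so only these mixed terms survive. I would try first to use the second covariant derivatives and the Ricci identity $h_{ijkl}-h_{ijlk}$ to express them via the Gauss curvature, obtaining a sign contradiction unless $\nabla h\equiv 0$ or $f_4$ is constant. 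This mirrors Chang's approach for $n=3$ and the method of Deng-Gu-Wei.
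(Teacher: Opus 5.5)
\textbf{There is a genuine gap, in the case $g=4$.} Your cases $g\le 2$ are fine.

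For $g=3$, the algebraic argument (three equations $\sum m_i\mu_i^r=f_r$, $r=1,2,3$, with nonzero Jacobian) correctly gives constant principal curvatures. But the conclusion is that this case is \emph{empty}, not that Cartan's hypersurface appears. By M\"unzner a $g=3$ isoparametric hypersurface has equal multiplicities, so $n\in\{3,6,12,24\}$ and $(2,1,1)$ is impossible; a minimal one would also have $S=2n=8$, not $12$. Cartan's example in $\mathbb{S}^5$ with $S=12$ has $g=4$. So the value $12$ must come out of exactly the case you leave open.

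In the $g=4$ case, your closure step claims that $\Delta f_4$ and $|\nabla f_4|^2$ are functions of $f_4$ alone. This fails. Since $h_{iik}=K_k/\prod_{j\neq i}(\lambda_j-\lambda_i)$, the identities $|\nabla h|^2=S(S-4)$, $\sum h_{ijk}^2\lambda_k=\tfrac12(S-4)f_3$ and $\mathcal{A}-2\mathcal{B}=Sf_4-f_3^2-S^2$ are only three linear relations among the eight nonnegative unknowns $K_1^2,\dots,K_4^2,h_{123}^2,\dots,h_{234}^2$. Hence neither $|\nabla f_4|^2=16\sum_kK_k^2$ nor $2\mathcal{A}+\mathcal{B}$ is determined by $f_4$.

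The max/min step does not close either. At a critical point of $f_4$ all $h_{iik}$ vanish, and the relations then do determine
\[
\Delta f_4=4\Bigl[\bigl(4-\tfrac{2S}{3}\bigr)f_4+\tfrac{5S^2(S-4)}{18}-\tfrac{S^2+f_3^2}{3}\Bigr].
\]
Comparing the maximum and minimum therefore forces $f_4$ to be constant only when $S<6$. For $S>6$, the range containing $12$, you only get $f_4^{\min}\le c\le f_4^{\max}$, which is no contradiction. Your appeal to the Ricci identity ``to get a sign contradiction'' has no mechanism behind it.

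The missing idea is the one the paper imports from Cheng--Li in Section 4: do not try to prove $f_4$ constant directly. Where the four principal curvatures are distinct, the 3-form $\Phi=\sum_{i<j}(\lambda_i+\lambda_j)\theta_{ij}$ is globally defined and satisfies $d\Phi=f_3\bigl(1+\sum_i c_ih_{44i}^2\bigr)\mathrm{vol}$, with the bracket positive. If $g\equiv4$, this holds on all of the closed manifold $M^4$. Stokes then gives $f_3\int_M\bigl(1+\sum_ic_ih_{44i}^2\bigr)\mathrm{vol}=0$, so $f_3=0$, i.e.\ $H_3=0$. The Deng--Gu--Wei theorem for $H_3=0$ with constant scalar curvature then yields isoparametricity and $S\in\{0,4,12\}$.

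So the $g=4$ case needs two things: an integral identity that kills $f_3$, and the deep $H_3=0$ classification (Lusala--Scherfner--Sousa, Deng--Gu--Wei). Your direct attack on $f_4$ would in effect have to reprove the latter from scratch, and as written it does not.
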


In this paper, we will prove the following theorem, which strongly supports Chern's conjecture:

\begin{theorem}  \label{mthm}
    Any closed minimal hypersurface $M^4$ of $\mathbb{S} ^5$ with constant scalar curvature and constant $3$-th mean curvature must be isoparametric. To be precise, $M^4$ is either an equatorial $4$-sphere, a product of spheres $\mathbb{S} ^{2}(\frac{\sqrt{2}}{2}) \times \mathbb{S} ^{2}(\frac{\sqrt{2}}{2})$ or $\mathbb{S} ^{1}(\frac{1}{2}) \times \mathbb{S} ^{3}(\frac{\sqrt{3}}{2})$, or a Cartan's minimal hypersurface.  
    In particular, the value of the squared norm of the second fundamental form $S$ can only be $0,\ 4$, or $12$.
\end{theorem}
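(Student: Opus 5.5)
Since $M^4$ is minimal with $f_1=0$, and $S=f_2$ and $f_3$ (equivalently $H_3$) are constant, the plan is to show that $f_4$ is also constant. Once $f_1,\dots,f_4$ are all constant, every principal curvature is constant. The isoparametric classification in $\mathbb{S}^5$ (Cartan/Münzner: $g\in\{1,2,3\}$ distinct curvatures when $n=4$) then gives the stated list and $S\in\{0,4,12\}$.

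I would work in a local orthonormal frame diagonalizing $h_{ij}$, with principal curvatures $\lambda_1,\dots,\lambda_4$, and use the Codazzi tensor $h_{ijk}$. Differentiating $f_1=0$, $f_2=S$ and $f_3$ gives three linear relations
\[
\sum_i h_{iik}=0,\qquad \sum_i\lambda_i h_{iik}=0,\qquad \sum_i\lambda_i^2h_{iik}=0 \quad\text{for each } k .
\]
At points where the $\lambda_i$ are distinct, these force $(h_{11k},\dots,h_{44k})$ to be proportional to the last column of the inverse Vandermonde matrix, namely $h_{iik}=c_k/\prod_{j\ne i}(\lambda_i-\lambda_j)$. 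This yields $\nabla f_4=4\sum_i\lambda_i^3\nabla\lambda_i$, which is expressible through $c_k$.

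Next I would use the Simons identity $\tfrac12\Delta S=|\nabla h|^2+S(4-S)=0$, so $|\nabla h|^2=S(S-4)$ is constant. I would also use the higher-order identity obtained from $\Delta f_3=0$,
\[
0=\tfrac13\Delta f_3=2\sum\lambda_i h_{ijk}^2+(4-S)f_3 .
\]
Combined with the divergence formulas for $\Delta f_4$ and $\Delta|\nabla h|^2$ (Peng–Terng style, as in Lusala–Scherfner–Sousa and Deng–Gu–Wei), these give integral identities relating $\int|\nabla^2 h|^2$, $\int f_4$ and $\int \sum\lambda_i^2h_{ijk}^2$. The goal is to show that $\int_M|\nabla f_4|^2=0$, or equivalently that the function $F=\sum h_{ijk}^2\lambda_i\lambda_j$ (or $f_4$ itself) satisfies an elliptic equation whose integration forces constancy. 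Concretely, I would compute $\tfrac14\Delta f_4$ in terms of $f_4$, $f_6$, $f_3^2$ and gradient terms. Using Newton identities for $n=4$, $f_5$ and $f_6$ are polynomials in $S$, $f_3$ and $f_4$. Integrating and comparing with $\int\Delta|\nabla h|^2=0$ would give a polynomial relation in $f_4$ with constant coefficients whose only solutions are locally constant.

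The expected main obstacle is handling the points where principal curvatures coincide. There the Vandermonde argument fails, and the number of distinct principal curvatures is not assumed constant, which is precisely the gap left by Cheng–Li and Tang–Yang. The first thing I would try is to show that the set $U$ where all four $\lambda_i$ are distinct is either dense or empty. If $U$ is dense, constancy of $f_4$ on $U$ extends by continuity, provided $f_4$ is locally constant on each component and the components are glued through the discriminant $\prod_{i<j}(\lambda_i-\lambda_j)^2$, which is itself a polynomial in $S$, $f_3$ and $f_4$. If the interior of the complement is nonempty, multiplicities are locally constant there, so Cheng–Li's theorem applies on that open set and gives constant curvatures, hence constant $f_4$ there. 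Analyticity of minimal hypersurfaces then makes $f_4$ constant on all of $M$.

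The delicate step is proving $f_4$ constant on $U$ without a sign assumption on $R$ (Tang–Yan needed $R\ge 0$, i.e. $S\le12$). I would attempt this by a maximum-principle argument on $f_4$. At an extremum of $f_4$ we have $\nabla f_4=0$, so $c_k=0$ for all $k$, hence $h_{iik}=0$ there. This simplifies $\Delta f_4$ to a sign-definite expression in $S$, $f_3$ and $f_4$, whose evaluation at the maximum and at the minimum forces the two values to coincide.
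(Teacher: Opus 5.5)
You have the right target: $f_1,\dots,f_4$ constant does force every principal curvature to be constant. You also correctly identify the degenerate set as the obstacle. A side repair there: do not invoke Cheng--Li on an open subset, because their theorem is global. Instead note that $M^4\setminus U$ lies in at most three level sets of $K=\tfrac{1}{8}S^2-\tfrac{1}{4}f_4$, since the discriminant is a nonzero cubic in $K$. So if it has interior, $K$ is constant by analyticity.

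\textbf{The gap: nothing in the proposal proves $f_4$ constant on $U$.}

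\emph{The maximum-principle step fails.} At a critical point of $f_4$ in $U$ you do get $h_{iik}=0$. The surviving components are then $h_{123},h_{124},h_{134},h_{234}$; write $x_l$ for the square of the one omitting the index $l$. The identities $|\nabla h|^2=S(S-4)$ and $\Delta f_3=0$ give only $6\sum_l x_l=S(S-4)$ and $\sum_l x_l\lambda_l=-\tfrac14(S-4)f_3$. But
\[
\tfrac14\Delta f_4=(4-S)f_4+\sum_l x_l\,(3S-2\lambda_l^2)
\]
involves $\sum_l x_l\lambda_l^2$, which these do not determine: $(1,1,1,1)$, $(\lambda_l)_l$, $(\lambda_l^2)_l$ are linearly independent for distinct $\lambda_l$. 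So $\Delta f_4$ there is not a function of $S,f_3,f_4$, let alone sign-definite. Even a relation $\Delta f_4=\phi(f_4)$ would give only $\phi(\max f_4)\le0\le\phi(\min f_4)$, which forces equality only if $\phi$ is increasing.

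\emph{The extremum argument often cannot start.} In every case not already covered by Cheng--Li, at least one extreme value of $K$ is attained on the degenerate set itself. These are the levels $K=a,b$ where $p_0(\lambda)+K$ has a double root. There your linear relations give only, say, $h_{11k}+h_{22k}=0$ and $h_{33k}=h_{44k}=0$, leaving $h_{11k}-h_{22k}$ and $h_{12k}$ unconstrained.

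\emph{The integral-identities paragraph does not fill the gap.} Integrating $\Delta f_3$, $\Delta f_4$ and the formula for $|\nabla^2h|^2$ yields integral constraints, not a pointwise polynomial equation in $f_4$.

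\textbf{What the paper does instead.} It never proves $f_4$ constant directly; it proves $f_3\equiv0$ and then applies Deng--Gu--Wei. The ingredients you are missing are:
\begin{enumerate}
\item Dispose of points with exactly two distinct curvatures. Two double roots force $f_3=0$; at a triple root all $h_{ijk}$ vanish, so $S(S-4)=|\nabla h|^2=0$.
\item Normalize $f_3\ge0$. This pins down the degeneracy type at each extreme level: $\lambda_1<\lambda_2=\lambda_3<0$ at $K=a$, and $\lambda_1=\lambda_2<0$ at $K=b$.
\item Use Cheng--Li's $3$-form $\Phi$ on $U$, with $d\Phi=f_3\big(\sum_i c_i h_{44i}^2+1\big)\,\mathrm{vol}$ and positive bracket.
\item Apply Stokes to $(\eta_\varepsilon\circ K)\Phi$. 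The error term $dK\wedge\Phi=\sum_i u_iK_i^2\,\mathrm{vol}$ satisfies $u_i\ge-A$ near $K=a$ and $u_i\le A$ near $K=b$. These are exactly the one-sided bounds needed to dominate it by $A\,|\eta_\varepsilon'\circ K|\,|dK|^2$. The de Almeida--Brito lemma then sends this to $0$.
\end{enumerate}

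\textbf{A factual slip in your last step.} For $n=4$, M\"unzner's theorem allows $g\in\{1,2,4\}$, not $\{1,2,3\}$; $g=3$ requires $n\in\{3,6,12,24\}$. Cartan's hypersurface with $S=12$ is the $g=4$ example, which your list as written would miss.
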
 

\begin{remark}
 In Theorem \ref{mthm}, there is no assumption on the nonnegativity of the scalar curvature or on the number of distinct principal curvatures. Theorem \ref{mthm} is also a generalization of Deng-Gu-Wei's theorem from $H_3=0$ to $H_3$=constant.
\end{remark}  
\begin{remark} If we do not require $M^n$ to have constant scalar curvature, then the problem becomes more difficult. 
Peng-Terng {\cite{Peng & Terng1,Peng & Terng2}} proved that there exists a positive constant $\delta(n)$ depending only on $n$, such that if $n \leqslant S \leqslant n + \delta(n)$, $n \leqslant 5$, then $S \equiv n$. 
Later, Wei-Xu {\cite{Wei & Xu}} extended the result to $n = 6, 7$ and Zhang {\cite{Q.Zhang}} promoted it to $n \leqslant 8$.  Finally, Ding-Xin {\cite{Ding & Xin}} extended it to all dimensions by establishing a new estimate; in particular, they showed that if the dimension is $n \geqslant 6$, then the pinching constant $\delta(n) = \frac{n}{23}$. After that, Xu-Xu {\cite{Xu & Xu}} improved it to $\delta(n) = \frac{n}{22}$ and Lei-Xu-Xu {\cite{Lei & Xu & Xu}} showed $\delta(n) = \frac{n}{18}$.
\end{remark}

The paper is arranged as follows. In section 2, we will review basic formulas for closed minimal hypersurfaces in $\mathbb{S}^{n+1}$. In section 3, we will consider the case where there is one point in $M^4$ with two distinct principal curvatures and prove that it is the Clifford torus $\mathbb{S} ^{2}(\frac{\sqrt{2}}{2}) \times \mathbb{S} ^{2}(\frac{\sqrt{2}}{2})$ or $\mathbb{S} ^{1}(\frac{1}{2}) \times \mathbb{S} ^{3}(\frac{\sqrt{3}}{2})$. Finally, we will prove our main theorem in section 4.

\section{Preliminaries}

In this section, we recall basic formulas for closed minimal hypersurfaces in $\mathbb{S}^{n+1}$, which can be found in {\cite{Peng & Terng1,Peng & Terng2}}.

Let \(X:M^{n}\to\mathbb{S}^{n+1}\) be an \(n\)-dimensional immersed hypersurface in an \((n+1)\)-dimensional sphere \(\mathbb{S}^{n+1}\). Let $g$ denote the induced metric on $M^n$. For any point \(p\in M^{n}\), we choose a local orthonormal frame \(\{e_{1},\cdots,e_{n},e_{n+1}\}\) around \(p\) such that \(e_{1},\cdots,e_{n}\) are tangential to \(M^{n}\) and \(e_{n+1}\) is normal to \(M^{n}\). Let \(\{\omega_{1},\cdots,\omega_{n},\omega_{n+1}\}\) be the dual coframe and \(\{\omega_{ij}\mid 1\leqslant i,j\leqslant n\}\) be the connection \(1\)-forms. In this section, we make the following convention on the range of indices:
\[1\leqslant i,j,k,\cdots\leqslant n.\]
Then the structure equations of \(M^{n}\) are given by
\begin{equation}
\begin{split}
& d\omega_{i}=\sum_{j}\omega_{ij}\wedge\omega_{j},\quad \omega_{ij}+\omega_{ji}=0,\\
& d\omega_{ij}=\sum_{k}\omega_{ik}\wedge\omega_{kj}-\frac{1}{2}\sum_{k,l}R_{ijkl}\omega_{k}\wedge\omega_{l},
\end{split}
\end{equation}
where \(R_{ijkl}\) is the Riemannian curvature tensor of the induced metric on \(M^{n}\).

Let \(h=\sum_{ij}h_{ij}\omega_{i}\otimes\omega_{j}\) denote the second fundamental form, and \(H=\frac{1}{n}\sum_{i}h_{ii}\) the mean curvature. The Gauss equation is
\begin{equation} \label{Gauss equation}
R_{ijkl}=\delta_{ik}\delta_{jl}-\delta_{il}\delta_{jk}+h_{ik}h_{jl}-h_{il}h_{jk},
\end{equation}
and the Codazzi equation is
\begin{equation}
h_{ijk}=h_{ikj},
\end{equation}
where the covariant derivative of the second fundamental form is defined by
\begin{equation}\label{h_ijk}
\sum_{m}h_{ijm}\omega_{m}=dh_{ij}+\sum_{m}h_{mj}\omega_{mi}+\sum_{m}h_{im}\omega_{mj}.
\end{equation}

The second covariant derivative of the second fundamental form is defined by
\[
\sum_{m}h_{ijkm}\omega_{m}=dh_{ijk}+\sum_{m}h_{mjk}\omega_{mi}+\sum_{m}h_{imk}\omega_{mj}+\sum_{m}h_{ijm}\omega_{mk}.
\]
Thus we have the following Ricci identity:
\begin{equation}
h_{ijkl}-h_{ijlk}=\sum_{m}h_{mj}R_{mikl}+\sum_{m}h_{im}R_{mjkl}.
\end{equation}

By the Gauss equation \eqref{Gauss equation}, we obtain the Ricci curvature tensor $R_{ij}$ and the scalar curvature $R$ of the hypersurface:
\begin{equation}
\begin{split}
R_{ij} &= (n-1)\delta_{ij} + nHh_{ij} - \sum_{m}h_{im}h_{mj}, \\
R &= n(n-1) + n^{2}H^{2} - S,
\end{split}
\end{equation}
where $S = \sum_{i,j}h_{ij}^{2}$ is the square norm of the second fundamental form.

Define $f_3$ and $f_4$ by
$$f_3=\sum_{i,j,k}h_{ij}h_{jk}h_{ki},\ \ f_4=\sum_{i,j,k,l}h_{ij}h_{jk}h_{kl}h_{li}.$$

For an arbitrary fixed point $p\in M^n$, we take an orthonormal frame such that $h_{ij}=\lambda_{i}\delta_{ij}$ at $p$, for all $i,j$. 
Then at this point $p$, we have 
$$H=\frac{1}{n}\sum_{i=1}^{n}\lambda_i,\ \ S=\sum_{i=1}^{n}\lambda_{i}^2,\ \ f_3=\sum_{i=1}^{n}\lambda_{i}^3,\ \ f_4=\sum_{i=1}^{n}\lambda_{i}^4.$$
The Gauss-Kronecker curvature is given by
$$K=\frac{\textup{det}(h)}{\textup{det}(g)}=\prod_{i=1}^{n}\lambda_{i}.$$
Define $\mathcal{A}$ and $\mathcal{B} $ by
$$\mathcal{A} =\sum_{i,j,k}h_{ijk}^{2}\lambda_{i}^2,\ \ \mathcal{B} =\sum_{i,j,k}h_{ijk}^{2}\lambda_{i}\lambda_{j}.$$

For closed minimal hypersurfaces with constant scalar curvature, the following formulas are obtained through straightforward computation:
\begin{equation}
    \sum_{i,j,k} h^{2}_{ijk} = S(S - n),
\end{equation}
\begin{equation}
    \sum_{i,j,k,l} h^{2}_{ijkl} = (S - 2n - 3)S(S - n) + 3(\mathcal{A}  - 2\mathcal{B} ),
\end{equation}
\begin{equation}
    \Delta f_{3} = 3(n - S)f_{3} + 6h_{ijk}h_{ijl}h_{kl},
\end{equation}
\begin{equation}
    \Delta f_{4} = 4\big((n - S)f_{4} + 2\mathcal{A}  + \mathcal{B} \big),
\end{equation}
\begin{equation}
    \mathcal{A}  - 2\mathcal{B}  = Sf_{4} - f_{3}^{2} - S^{2}.
\end{equation}

Let \(\sigma_r : \mathbb{R}^n \to \mathbb{R}\) be the elementary symmetric functions defined by
\[
\sigma_r(\lambda_1, \dots, \lambda_n) = \sum_{i_1 < i_2 < \cdots < i_r} \lambda_{i_1} \lambda_{i_2} \cdots \lambda_{i_r},\ \ 1\leqslant r \leqslant n.
\]

\begin{lemma}
    \textup{(Newton's formula)} Let $p(\lambda) = (\lambda - \lambda_1)(\lambda - \lambda_2)\cdots (\lambda - \lambda_n)$ and 
    $f_k = \lambda_{1}^k + \lambda_{2}^k +\cdots + \lambda_{n}^k \,\,(k=1,2,\cdots)$. Then
    \[
    \begin{cases}
        &f_k - \sigma_{1}f_{k-1} + \cdots + (-1)^{k-1}\sigma_{k-1}f_{1} + (-1)^{k}k\sigma_k = 0,(1\leqslant k\leqslant n); \\[6pt]
        &f_k - \sigma_{1}f_{k-1} + \cdots + (-1)^{n}\sigma_{n}f_{k-n} = 0,(k>n).
    \end{cases}
    \]
\end{lemma}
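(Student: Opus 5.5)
Newton's formula is a purely algebraic identity among power sums and elementary symmetric functions of $\lambda_1,\dots,\lambda_n$, so the plan is to prove it by generating functions. Put
\[
E(t)=\prod_{i=1}^n(1-\lambda_i t)=\sum_{r=0}^n(-1)^r\sigma_r t^r ,\qquad \sigma_0=1 ,
\]
so that $t^n p(1/t)=E(t)$. The key step is to take the logarithmic derivative. As formal power series in $t$,
\[
-t\,\frac{E'(t)}{E(t)}=\sum_{i=1}^n\frac{\lambda_i t}{1-\lambda_i t}=\sum_{k\geqslant1}f_k t^k .
\]
Multiplying through by $E(t)$ gives the polynomial identity
\[
-\sum_{r=1}^n(-1)^r r\sigma_r t^r=\Big(\sum_{r=0}^n(-1)^r\sigma_r t^r\Big)\Big(\sum_{j\geqslant1}f_j t^j\Big).
\]

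Next compare the coefficients of $t^k$ on both sides. The right side gives $\sum_{r=0}^{\min(k-1,n)}(-1)^r\sigma_r f_{k-r}$. The left side gives $-(-1)^k k\sigma_k$ when $k\leqslant n$, and $0$ when $k>n$.

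For $1\leqslant k\leqslant n$ this reads
\[
f_k-\sigma_1f_{k-1}+\cdots+(-1)^{k-1}\sigma_{k-1}f_1+(-1)^k k\sigma_k=0 .
\]
For $k>n$ it reads
\[
f_k-\sigma_1 f_{k-1}+\cdots+(-1)^n\sigma_n f_{k-n}=0 .
\]
These are exactly the two cases of the lemma.

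For the second case there is also a shortcut that avoids series entirely. Each $\lambda_i$ is a root of $p$, so $\lambda_i^{k-n}p(\lambda_i)=0$. Expanding $p(\lambda)=\sum_r(-1)^r\sigma_r\lambda^{n-r}$ and summing over $i$ gives the $k>n$ identity directly.

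There is essentially no obstacle here. The only point needing care is the bookkeeping of the index ranges: the term $r=k$ is absent on the right because the power sums start at $f_1$, and it is this missing term that produces the coefficient $k$ in $k\sigma_k$. The logarithmic-derivative step is valid as an identity of formal power series, so no convergence issue arises.
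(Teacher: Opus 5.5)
Your generating-function argument is correct and complete. Since $E(0)=1$, the identity $-tE'(t)=E(t)\sum_{k\geqslant1}f_kt^k$ holds in $\mathbb{R}[[t]]$, and your coefficient comparison, with the right-hand sum running over $0\leqslant r\leqslant\min(k-1,n)$, gives exactly the two stated cases. Your root-substitution shortcut for $k>n$ is also right.

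The paper gives no proof of this lemma; it quotes it as the classical Newton identities, so there is no competing route to compare. It only uses the first case with $n=4$ and $\sigma_1=f_1=0$. That case yields $\sigma_2=-\frac{S}{2}$, $\sigma_3=\frac{f_3}{3}$ and $f_4=\frac{S^2}{2}-4K$, hence $p(\lambda)=\lambda^4-\frac{S}{2}\lambda^2-\frac{f_3}{3}\lambda+K$, all of which your argument covers.

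One small point of phrasing: the factor $k$ in $k\sigma_k$ comes from differentiating $t^k$ in $-tE'(t)$. The absence of an $r=k$ term on the right only means that $\sigma_k$ enters through the left-hand side.
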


Define the $r$-th mean curvature by
\[
H_r=\frac{1}{ \binom{n}{r} }\sigma_r, \ \ 1\leqslant r \leqslant n.
\]

When $n=4$, by Newton's formula, we have
\[
\begin{cases}
    &f_1 - \sigma_1 = 0, \\
    &f_2 - f_{1}\sigma_1 + 2\sigma_2 = 0, \\
    &f_3 - f_{2}\sigma_1 + f_{1}\sigma_2 - 3\sigma_3 = 0, \\
    &f_4 - f_{3}\sigma_1 + f_{2}\sigma_2 - f_{1}\sigma_3 + 4\sigma_4 = 0.
\end{cases}
\]

Since the hypersurface is assumed to be minimal and $f_{2}=S$, one has $H_3=\frac{1}{4}\sigma_3=\frac{1}{12}f_3$, and the characteristic polynomial of the matrix $(h_{ij})$ corresponding to the second fundamental form is given by
\begin{equation} \label{the characteristic polynomial}
    p(\lambda)=\lambda^4-\frac{S}{2} \lambda^2-\frac{f_3}{3}\lambda+K.
\end{equation}
Also, one has
\begin{equation}
    f_4 = \frac{S^2}{2} - 4K.
\end{equation}

Obviously, if $S\equiv0$, then $M^4$ is the totally geodesic great sphere $\mathbb{S}^{4}(1)$. 
Suppose from now that $S>0$ on $M^4$. 

\section{Two distinct principle curvatures at one point}

In this section, we will deal with the case where there exists one point that has two distinct principal curvatures. 
More precisely, we will prove the following result.

\begin{theorem}
    Let $M^4 \hookrightarrow \mathbb{S}^{5}(1)$ be a closed minimal hypersurface with constant scalar curvature and constant $3$-th mean curvature. 
    If there exists a point with two distinct principal curvatures, then $S=4$ and $M^4$ is the Clifford torus $\mathbb{S} ^{2}(\frac{\sqrt{2}}{2}) \times \mathbb{S} ^{2}(\frac{\sqrt{2}}{2})$
    or $\mathbb{S} ^{1}(\frac{1}{2}) \times \mathbb{S} ^{3}(\frac{\sqrt{3}}{2})$.
\end{theorem}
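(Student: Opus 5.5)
The strategy is to propagate the pointwise condition at $p$ to all of $M^4$ using the characteristic polynomial \eqref{the characteristic polynomial}. Its coefficients involve $S$ and $f_3$, which are constant, and $K$, which is not.

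\textbf{Step 1: identify the multiplicity pattern at $p$.} At $p$, with two distinct principal curvatures and $\sum\lambda_i=0$, the pattern is either $(\mu,\mu,-\mu,-\mu)$ or $(\mu,\mu,\mu,-3\mu)$. In the first case $S=4\mu^2$, $f_3=0$, $K=\mu^4=S^2/16$. In the second case $S=12\mu^2$, $f_3=-24\mu^3$, $K=-3\mu^4$. Since $S$ and $f_3$ are constant on $M^4$, the case type is global: $f_3\equiv 0$ in the first case, and $f_3\equiv -24\mu^3\neq0$ with $\mu$ fixed in the second.

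\textbf{Step 2: show $K$ is constant.} Consider the function $K$, equivalently $f_4=\frac{S^2}{2}-4K$.

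\emph{Case $f_3=0$.} Then $p(\lambda)=\lambda^4-\frac S2\lambda^2+K$, so the principal curvatures are $\pm a,\pm b$ with $a^2+b^2=S/2$ and $K=a^2b^2\le S^2/16$. Hence $K$ attains its maximum $S^2/16$ at $p$. Equivalently, $f_4=S^2/4$ is the minimum of $f_4$. In this case Theorem \ref{thm-Deng} (Deng--Gu--Wei, $H_3=0$) already applies directly. It gives that $M^4$ is isoparametric, and the only isoparametric example with the pattern $(\mu,\mu,-\mu,-\mu)$ is the Clifford torus $\mathbb{S}^2(\frac{\sqrt2}{2})\times\mathbb{S}^2(\frac{\sqrt2}{2})$, with $S=4$. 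I would simply invoke it.

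\emph{Case $f_3\neq0$.} This is the main case. The quartic $\lambda^4-\frac S2\lambda^2-\frac{f_3}{3}\lambda+K$ has a triple root exactly when $K$ takes the extremal value $-3\mu^4$ among values compatible with four real roots. Indeed, as $K$ varies with the other coefficients fixed, the discriminant vanishes at the boundary, so $p$ is a global extremum of $K$, hence of $f_4$. At an extremum $\nabla f_4=0$, and $\Delta f_4$ has a definite sign there, via $\Delta f_4=4((4-S)f_4+2\mathcal A+\mathcal B)$. The plan is to turn this into a global statement as follows. Show that the set $U$ where the multiplicity pattern $(3,1)$ holds is open and closed. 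It is closed since it is the extremal level set of $K$. For openness, on a neighbourhood of $U$ use frames adapted to a smooth splitting of eigenvalues and Codazzi to compute $h_{ijk}$. With a triple eigenvalue $\mu$ constant on $U$ and $-3\mu$ constant, the standard argument (Ryan/Otsuki type) gives $h_{ijk}=0$ on the interior of $U$.

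The delicate point is openness, i.e.\ excluding points near $U$ where the eigenvalues split. Here I would use a maximum-principle argument on $K$. Near $p$, write $K-K(p)$ in terms of the small eigenvalue gaps; Codazzi and the constancy of $S,f_3$ force its Laplacian to satisfy an inequality $\Delta (K-K(p))\ge -C|K-K(p)|$ (with the appropriate sign). Then the strong maximum principle gives $K\equiv K(p)$ near $p$, hence on all of $M$ by connectedness. This is the step I expect to be the main obstacle: controlling $\mathcal A,\mathcal B$ in terms of the eigenvalue gaps near a degenerate point.

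\textbf{Step 3: conclude.} Once $K$ is constant, all principal curvatures are constant, so $M^4$ is isoparametric with two distinct principal curvatures. Then $\sum h_{ijk}^2=S(S-4)=0$ forces $S=4$, and the Chern--do Carmo--Kobayashi/Lawson classification of the case $S\equiv n$ yields $\mathbb{S}^1(\frac12)\times\mathbb{S}^3(\frac{\sqrt3}{2})$ in the second case, or the Clifford torus in the first.
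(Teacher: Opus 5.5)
Your handling of the pattern $(\mu,\mu,-\mu,-\mu)$ matches the paper (Case~I: $f_3\equiv0$, then Theorem~\ref{thm-Deng}). The gap is in the case $f_3\neq0$.

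\textbf{The openness step is never proved.} You flag it yourself as the main obstacle, and it is never carried out. The inequality $\Delta(K-K(p))\geqslant -C|K-K(p)|$ is only asserted. Nothing in the sketch controls the $\mathcal A,\mathcal B$ terms of $\Delta f_4$ near $p$. Since $\sum h_{ijk}^2=S(S-4)$ is constant on $M^4$, there is no a priori smallness of $h_{ijk}$ near $p$ to exploit. As written, this case is unproved.

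\textbf{The premise behind the step is also mistaken.} There is no interval of admissible values of $K$ with $-3\mu^4$ at one end. Take $S=12\mu^2$, $f_3=-24\mu^3$ and $p_0(\lambda)=\lambda^4-\frac S2\lambda^2-\frac{f_3}{3}\lambda$. Then $p_0'(\lambda)=4(\lambda-\mu)^2(\lambda+2\mu)$, so $p_0$ is strictly monotone on each side of $-2\mu$. The polynomial $p_0+K$ is the characteristic polynomial of a symmetric matrix, so it has four real roots counted with multiplicity. The monotonicity then forces $p_0+K=(\lambda-\mu)^3(\lambda+3\mu)$, i.e.\ $K\equiv-p_0(\mu)=-3\mu^4$ at every point.

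Equivalently, $(\mu,\mu,\mu,-3\mu)$ attains equality in Okumura's inequality $|f_3|\leqslant\frac{1}{\sqrt3}S^{3/2}$. Since $S$ and $f_3$ are constant, equality holds at every point. So the $(3,1)$ pattern holds everywhere with constant principal curvatures, and no maximum principle is needed. Your Step~3 then goes through: with constant multiplicities, $h_{ijk}=(\lambda_i-\lambda_j)\varGamma_{ijk}$ together with Codazzi gives $h_{ijk}\equiv0$.

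\textbf{What the paper does instead.} It does not globalize at all; it works at the single point $p$.
\begin{enumerate}
\item From $\sum_ih_{iik}=\sum_i\lambda_ih_{iik}=0$ it gets $h_{44k}=0$.
\item It writes $(S)_{mm}=0$ and $(f_3)_{mm}=0$ at $p$. Using $\sum_ih_{iimm}=0$, the second-order terms in both become multiples of $h_{44mm}$.
\item After normalizing, adding the two equations cancels $h_{44mm}$. What remains is a sum of squares of components $h_{ijm}$ equal to zero, so $\nabla h(p)=0$.
\item Simons' identity with $S$ constant gives $\sum h_{ijk}^2=S(S-4)$ pointwise on all of $M^4$. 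One point with $\nabla h=0$ therefore gives $S=4$.
\item Chern--do Carmo--Kobayashi/Lawson and the multiplicities at $p$ identify $\mathbb{S}^1(\frac12)\times\mathbb{S}^3(\frac{\sqrt3}{2})$.
\end{enumerate}
You had the identity $\sum h_{ijk}^2=S(S-4)$ available but used it only after globalizing. Using it pointwise, or the algebraic rigidity above, makes the maximum-principle detour unnecessary.
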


\begin{proof}
The existence of two distinct principal curvatures at a point can be divided into two cases: the first is that $p(\lambda)$ has two different double real roots, and the second is that $p(\lambda)$ has one triple real root and one simple real root.
\medskip

\textbf{Case \textrm{I}:} Two different double real roots.

In this case, we may assume that $\lambda_1=\lambda_2 < \lambda_3=\lambda_4$. Since $M$ is minimal, one has $\lambda_1=\lambda_2=-\lambda_3=-\lambda_4$. Obviously $f_3 \equiv 0$ as $f_3$ is constant.  By Theorem \ref{thm-Deng}, $S = 4$ and $M^4$ is the Clifford torus $\mathbb{S} ^{2}(\frac{\sqrt{2}}{2}) \times \mathbb{S} ^{2}(\frac{\sqrt{2}}{2})$.

\medskip

\textbf{Case \textrm{II}:} One triple real root and one simple real root.

In this case, we may assume $\lambda_1=\lambda_2=\lambda_3=a,\lambda_4=-3a.$

Since the mean curvature $H = 0$ and $S=constant$, after taking the covariant derivative, we have the following for $1 \leqslant k \leqslant 4$,
\[
\begin{cases}
	h_{11k}+h_{22k}+h_{33k}+h_{44k}=0, \\
	ah_{11k}+ah_{22k}+ah_{33k}-3ah_{44k}=0 .
\end{cases}
\]
This system yields
$$h_{11k}+h_{22k}+h_{33k}=0, \quad h_{44k}=0,\ \ \  1\leqslant k \leqslant 4.$$

Since $H=\sum\limits_{i} h_{ii}= 0$, we have $(H)_{mm}=0$ for $m=1,2,3,4$, i.e.,
$$\sum_{i} h_{iimm}=0.$$
This gives
\[
\begin{cases}
    h_{1111} + h_{2211} + h_{3311} + h_{4411} = 0, \\
    h_{1122} + h_{2222} + h_{3322} + h_{4422} = 0, \\
    h_{1133} + h_{2233} + h_{3333} + h_{4433} = 0, \\
    h_{1144} + h_{2244} + h_{3344} + h_{4444} = 0.
\end{cases}
\]

Since $S=\sum\limits_{i,j}h_{ij}^2$ is constant, $(S)_{mm}=0$ for $m=1,2,3,4$. So we have
\begin{equation} \label{derivative of S}
    \sum_{i,j}(h_{ii}h_{iimm}+h_{ijm}^2)=0.
\end{equation}
For $m = 1$:
\begin{align*} 
    \sum_{i,j}(h_{ii}h_{ii11}+h_{ij1}^2)
    =& \, a(h_{1111}+h_{2211}+h_{3311})-3ah_{4411}+h_{111}^2+h_{221}^2+h_{331}^2 \\[-0.5em]
     & +2(h_{112}^2+h_{113}^2+h_{114}^2+h_{123}^2+h_{124}^2+h_{134}^2) \\[0.5em]
    =& -4ah_{4411}+h_{111}^2+h_{221}^2+h_{331}^2\\[0.5em]
     & +2(h_{112}^2+h_{113}^2+h_{114}^2+h_{123}^2+h_{124}^2+h_{134}^2)\\[0.5em]
    =& \, 0,
\end{align*}
i.e.,
\begin{equation}  \label{S_11}
    h_{111}^2+h_{221}^2+h_{331}^2+2(h_{112}^2+h_{113}^2+h_{114}^2+h_{123}^2+h_{124}^2+h_{134}^2)=4ah_{4411}.
\end{equation}
For $m = 4$:
\begin{align*}
    \sum_{i,j}(h_{ii}h_{ii44}+h_{ij4}^2) 
    =& \, a(h_{1144}+h_{2244}+h_{3344})-3ah_{4444}\\[-0.5em]
     &+\left[h_{114}^2+h_{224}^2+h_{334}^2+2(h_{124}^2+h_{134}^2+h_{234}^2)\right] \\[0.5em]
    =& -4ah_{4444}+\left[h_{114}^2+h_{224}^2+h_{334}^2+2(h_{124}^2+h_{134}^2+h_{234}^2)\right]\\[0.5em]
    =& \, 0,
\end{align*}
i.e.,
\begin{equation}  \label{S_44}
    h_{114}^2+h_{224}^2+h_{334}^2+2(h_{124}^2+h_{134}^2+h_{234}^2)=4ah_{4444}.
\end{equation}

Similarly, since $f_3=\sum\limits_{i,j,k} h_{ij}h_{jk}h_{ki}$ is constant, $(f_3)_{mm}=0$  for $m=1,2,3,4$. So we have
\begin{equation}  \label{derivative of f_3}
    \sum_{i,j} (\lambda_{i}^2 h_{iimm}+2\lambda_i h_{ijm}^2)=0.
\end{equation}
For $m = 1$:
\begin{align*}
    \sum_{i,j} (\lambda_{i}^2 h_{ii11}+2\lambda_i h_{ij1}^2) 
    =& \, a^2(h_{1111}+h_{2211}+h_{3311})+9a^2h_{4411}\\[-0.5em]
     &+2a\sum_{i}(h_{11i}^2+h_{12i}^2+h_{13i}^2-3h_{14i}^2) \\
    =& \, 8a^2h_{4411}+2a\big[h_{111}^2+h_{221}^2+h_{331}^2\\[0.5em]
     &+2(h_{112}^2+h_{113}^2-h_{114}^2+h_{123}^2-h_{124}^2-h_{134}^2)\big]\\[0.5em]
    =& \, 0,
\end{align*}
i.e.,
\begin{equation}  \label{f_3 11}
    h_{111}^2+h_{221}^2+h_{331}^2+2(h_{112}^2+h_{113}^2-h_{114}^2+h_{123}^2-h_{124}^2-h_{134}^2)=-4ah_{4411}.
\end{equation}
For $m = 4$: 
\begin{align*}
    \sum_{i,j} (\lambda_{i}^2 h_{ii44}+2\lambda_i h_{ij4}^2)
    =& \, a^2(h_{1144}+h_{2244}+h_{3344})+9a^2h_{4444}\\[-0.5em]
     &+2a\sum_{i}(h_{14i}^2+h_{24i}^2+h_{34i}^2-3h_{44i}^2) \\
    =& \, 8a^2h_{4444}+2a\left[h_{114}^2+h_{224}^2+h_{334}^2+2(h_{124}^2+h_{134}^2+h_{234}^2)\right]\\[0.5em]
    =& \, 0,
\end{align*}
i.e.,
\begin{equation}  \label{f_3 44}
    h_{114}^2+h_{224}^2+h_{334}^2+2(h_{124}^2+h_{134}^2+h_{234}^2)=-4ah_{4444}.
\end{equation}

Adding equations \eqref{S_11} and \eqref{f_3 11} gives $$h_{111}^2+h_{221}^2+h_{331}^2+2(h_{112}^2+h_{113}^2+h_{123}^2)=0.$$ So
$$h_{111}=h_{221}=h_{331}=h_{112}=h_{113}=h_{123}=0.$$

Adding equations \eqref{S_44} and \eqref{f_3 44} gives $$h_{114}^2+h_{224}^2+h_{334}^2+2(h_{124}^2+h_{134}^2+h_{234}^2)=0.$$  So
$$h_{114}=h_{224}=h_{334}=h_{124}=h_{134}=h_{234}=0.$$

Now, only $h_{222},h_{223},h_{332},h_{333}$ are possibly non-zero. We show that these also vanish.

For $m = 2$, equations \eqref{derivative of S} and \eqref{derivative of f_3} become
\[
\begin{cases}
    h_{222}^2+2h_{223}^2+h_{332}^2 =4ah_{4422}, \\
    h_{222}^2+2h_{223}^2+h_{332}^2 =-4ah_{4422}.
\end{cases}
\]
So $h_{222}^2+2h_{223}^2+h_{332}^2=0$, and hence
$$h_{222}=h_{223}=h_{332}=0.$$

For $m = 3$, equations \eqref{derivative of S} and \eqref{derivative of f_3} become
\[
\begin{cases}
    h_{333}^2 =4ah_{4433}, \\
    h_{333}^2 =-4ah_{4433}.
\end{cases}
\]
So $h_{333}^2=0$, and hence
$$h_{333}=0.$$

Thus, we have shown that $h_{ijk} = 0$ for all $i, j, k = 1, 2, 3, 4$. 
Since $\sum_{i,j,k} h_{ijk}^2 = S(S - 4)$, we have $S(S - 4) = 0$. 
Given $S > 0$, it follows that $S = 4$ and $M^4$ is the Clifford torus $\mathbb{S} ^{1}(\frac{1}{2}) \times \mathbb{S} ^{3}(\frac{\sqrt{3}}{2})$.
\end{proof}

\begin{remark}
    This case has been dealt with in {\cite{Xiao Ling}}, but our proof is more direct. 
\end{remark}

\section{Proof of the main theorem}
In the previous section, we proved that if $M^4$ has two distinct principal curvatures at a point, then $S=4$ and $M^4$ is the Clifford torus $\mathbb{S} ^{2}(\frac{\sqrt{2}}{2}) \times \mathbb{S} ^{2}(\frac{\sqrt{2}}{2})$ or $\mathbb{S} ^{1}(\frac{1}{2}) \times \mathbb{S} ^{3}(\frac{\sqrt{3}}{2})$. When $M^4$ has four identical principal curvatures at a point, it corresponds to the trivial case where $S=0$ and $M^4$ is an equatorial 4-sphere. Therefore, in this section, we may assume that every point in $M^4$ has either three or four distinct principal curvatures. 

From now on, we also assume that $M^4$ is connected and oriented. 
Otherwise, we can discuss the situation on each connected component of $M^4$ or on the double covering of $M^4$.

For convenience, we first make some notations. 
Let us denote by $\lambda_{1}(p)\leqslant \lambda_{2}(p)\leqslant \lambda_{3}(p) \leqslant \lambda_{4}(p)$ the eigenvalues of $h(p)$ for each $p\in M^4$. 
For convenience of exposition, we will assume that $f_3 \geqslant 0$; otherwise, we consider $\tilde{\lambda}_i = -\lambda_i$ by choosing the normal vector field in the opposite direction.
Note that $\lambda_i$ is continuous for each $i=1,2,3,4$. 
The characteristic polynomial \eqref{the characteristic polynomial} of $h$ is important in our discussion:
\begin{equation*}
    p(\lambda) = \lambda^4 - \frac{S}{2}\lambda^2 - \frac{f_3}{3}\lambda + K.
\end{equation*}

We set
\begin{equation}
    p_{0}(\lambda) = \lambda^4 - \frac{S}{2}\lambda^2 - \frac{f_3}{3}\lambda.
\end{equation}

Clearly, $p_{0}(\lambda)$ is a polynomial of degree 4 with coefficients depending on $S,f_3$ and independent of $K$. 
Moreover, combining \eqref{the characteristic polynomial}, we have
\begin{equation}
    p(\lambda) = p_{0}(\lambda) + K.
\end{equation}

\subsection{Discussion on   \texorpdfstring{$\text{Im}K$}{ImK}}
$\ $
\medskip

As the Gauss curvature $K$ is a smooth function on the closed $M^4$, we can assume that the range of $K$ is 
$$\text{Im}K = [a_0,b_0], \quad  a_0 \leqslant b_0.$$

To further determine $\text{Im}K$, we study the graph of the polynomial $p_{0}(\lambda)$ which is independent of $K$. 

It is clear that the graph of $p_{0}(\lambda)$, as a quartic function, can only fall into one of the three cases. See the following Figures 1, 2, and 3 of the polynomial function $p_{0}(\lambda)$, where $\mu_1 \leqslant \mu_2 \leqslant \mu_3$ are all the extreme points of $p_{0}(\lambda)$.
\begin{center}
    \tikzset{every picture/.style={line width=0.75pt}} %set default line width to 0.75pt        

    \begin{tikzpicture}[x=0.75pt,y=0.75pt,yscale=-1,xscale=1]
    %uncomment if require: \path (0,300); %set diagram left start at 0, and has height of 300

    %Curve Lines [id:da14201825890381736] 
    \draw    (214,106.5) .. controls (255,229) and (328,237.5) .. (356,103.5) ;

    % Text Node
    \draw (191,220) node [anchor=north west][inner sep=0.75pt]   [align=left] {Figure 1. There is one extreme point};

\end{tikzpicture}

\end{center}

\medskip
\medskip

\begin{center}
    \tikzset{every picture/.style={line width=0.75pt}} %set default line width to 0.75pt        

    \begin{tikzpicture}[x=0.75pt,y=0.75pt,yscale=-1,xscale=1]
    %uncomment if require: \path (0,300); %set diagram left start at 0, and has height of 300

    %Curve Lines [id:da14201825890381736] 
    \draw    (185,74.5) .. controls (203,189.5) and (239,199.5) .. (284,132.5) ;
    %Curve Lines [id:da3225520100160256] 
    \draw    (284,132.5) .. controls (340,49.5) and (339,182.5) .. (375,192.5) ;
    %Curve Lines [id:da4219932812960274] 
    \draw    (375,192.5) .. controls (394,199.5) and (432,169.5) .. (446,80.5) ;
    %Straight Lines [id:da8582742998073106] 
    \draw  [dash pattern={on 4.5pt off 4.5pt}]  (316,105) -- (470,105) ;
    %Straight Lines [id:da058060599061717344] 
    \draw  [dash pattern={on 4.5pt off 4.5pt}]  (234,175) -- (472,175) ;
    %Straight Lines [id:da26387113964045517] 
     \draw  [dash pattern={on 4.5pt off 4.5pt}]  (381,195) -- (468,195) ;

    % Text Node
    \draw (144,220) node [anchor=north west][inner sep=0.75pt]   [align=left] {Figure 2. There are three extreme points and $\displaystyle p_{0}( \mu _{1}) \geqslant p_{0}( \mu _{3})$};
    % Text Node
    \draw (483,94.4) node [anchor=north west][inner sep=0.75pt]    {$p_{0}( \mu _{2})$};
    % Text Node
    \draw (483,162.4) node [anchor=north west][inner sep=0.75pt]    {$p_{0}( \mu _{1})$};
    % Text Node
    \draw (484,189.4) node [anchor=north west][inner sep=0.75pt]    {$p_{0}( \mu _{3})$};

    \end{tikzpicture}

\end{center}

\medskip

\begin{center}
    \tikzset{every picture/.style={line width=0.75pt}} %set default line width to 0.75pt        

    \begin{tikzpicture}[x=0.75pt,y=0.75pt,yscale=-1,xscale=1]
    %uncomment if require: \path (0,300); %set diagram left start at 0, and has height of 300

    %Curve Lines [id:da14201825890381736] 
    \draw    (178,63.5) .. controls (187.55,147.95) and (248,281.5) .. (293,143.5) .. controls (338,5.5) and (376,327.5) .. (422,65.5) ;
    %Straight Lines [id:da8582742998073106] 
    \draw  [dash pattern={on 4.5pt off 4.5pt}]  (315,107) -- (469,107) ;
    %Straight Lines [id:da058060599061717344] 
    \draw  [dash pattern={on 4.5pt off 4.5pt}]  (243,204) -- (458,204) -- (470,204) ;
    %Straight Lines [id:da26387113964045517] 
    \draw  [dash pattern={on 4.5pt off 4.5pt}]  (378,172) -- (465,172) ;

    % Text Node
    \draw (144,220) node [anchor=north west][inner sep=0.75pt]   [align=left] {Figure 3. There are three extreme points and $\displaystyle p_{0}( \mu _{1}) \leqslant p_{0}( \mu _{3})$};
    % Text Node
    \draw (483,96.4) node [anchor=north west][inner sep=0.75pt]    {$p_{0}( \mu _{2})$};
    % Text Node
    \draw (484,195.4) node [anchor=north west][inner sep=0.75pt]    {$p_{0}( \mu _{1})$};
    % Text Node
    \draw (484,163.4) node [anchor=north west][inner sep=0.75pt]    {$p_{0}( \mu _{3})$};

    \end{tikzpicture}
\end{center}

Recalling the assumption stated at the beginning of this section, i.e., every point in $M^4$ has either three or four distinct principal curvatures, this is equivalent to that the equation
\begin{equation}  \label{the characteristic polynomial equation}
    p(\lambda) = p_{0}(\lambda) + K = 0
\end{equation}
has three or four distinct roots on $M^4$, which is also equivalent to that the graph of $p_{0}(\lambda)$ intersects the line $y=-K$ at three or four distinct points. Clearly, only Figures 2 and 3 satisfy the condition that the graph of $p_{0}(\lambda)$ intersects the line $y=-K$ at three or four distinct points, and obviously $\mu_1 < \mu_2 < \mu_3$.

Define
$$a := -p_{0}(\mu_2) \quad \text{and} \quad b := -\max\{p_{0}(\mu_1), p_{0}(\mu_3) \}.$$
Then $-b \leqslant -K \leqslant -a$, i.e.,$a \leqslant K \leqslant b$.
We obtain immediately that
$$\text{Im}K = [a_0,b_0]\subset [a,b].$$
So we need to consider four cases: (1) $a_{0} > a$, $b_{0} < b$; (2) $a_{0} = a$, $b_{0} < b$; (3) $a_{0} > a$, $b_{0} = b$; (4) $a_{0} = a$, $b_{0} = b$.

For the case (1), i.e., all the principal curvatures are distinct on $M^{4}$, it has already been completed by Cheng-Li {\cite{Tongzhu Li}}. 
So in the following, it is sufficient for us to deal with the case (4), and the other cases are analogous. 

In addition, if $K = b_0 = b$, then it can be seen from Figure 2 and 3 that $\lambda_1 = \lambda_2 < \lambda_3 < \lambda_4(\lambda_1 = \lambda_2 < 0)$ or $\lambda_1 < \lambda_2  < \lambda_3 = \lambda_4(\lambda_3 = \lambda_4 > 0)$. 
For the second case, one has 
\begin{align*}
    \lambda_1 &= -\lambda_4 - \sqrt{\frac{1}{2}S - 2\lambda_{4}^{2}}, \\
    \lambda_2 &= -\lambda_4 + \sqrt{\frac{1}{2}S - 2\lambda_{4}^{2}},
\end{align*}
which gives
$$f_3 = \sum_{i=1}^{4} \lambda_{i}^{3} = -6\lambda_4 (\frac{1}{2}S - 2\lambda_{4}^{2}) < 0,$$
a contradiction. So this case does not exist.

If $K = a_0 = a$, then $\lambda_1 < \lambda_2 = \lambda_3 < \lambda_4$, and 
\begin{align*}
  \lambda_1 &= -\lambda_2 - \sqrt{\frac{1}{2}S - 2\lambda_{2}^{2}}, \\
  \lambda_4 &= -\lambda_2 + \sqrt{\frac{1}{2}S - 2\lambda_{2}^{2}}, \\
  f_3 &= \sum_{i=1}^{4} \lambda_{i}^{3} = -6\lambda_2 (\frac{1}{2}S - 2\lambda_{2}^{2}) \geqslant 0,
\end{align*}
which gives
$$\lambda_2 = \lambda_3 \leqslant 0.$$
If $\lambda_2 = \lambda_3 = 0$, then $f_3 = 0$ and by Theorem \ref{thm-Deng}, Theorem \ref{mthm} follows. Thus, we only need to consider the case that 
\begin{equation}
  \lambda_2 = \lambda_3 < 0
\end{equation}

From now on, we assume that $\text{Im}K = [a, b]$ with $a < b$, and $a, b$ are achieved on the points where $p_{0}(\lambda)$ achieves the local maximum values and the maximum of its local minimum values. In addition, based on the discussion above, we know that when $K = a$, then $\lambda_1 < \lambda_2 = \lambda_3 < 0 < \lambda_4$, and when $K = b$, then $\lambda_1 = \lambda_2 < \lambda_3 < \lambda_4(\lambda_1 = \lambda_2 < 0)$

We define
\begin{equation} \label{set classification}
    \begin{aligned}
        X :&= K^{-1}(a) \\
           &= \{p \in M^{4}: K(p) = a\} \\
           &= \{p \in M^{4}: \lambda_1(p) < \lambda_2(p) = \lambda_3(p) < 0 < \lambda_4(p)\}, \\
        Y :&= \{p \in M^{4}: a < K(p) < b\} \\
           &= \{p \in M^{4}: \lambda_1(p) < \lambda_2(p) < \lambda_3(p) < \lambda_4(p)\}, \\
        Z :&= K^{-1}(b) \\
           &= \{p \in M^{4}: K(p) = b\} \\
           &= \{p \in M^{4}: \lambda_1 (p) = \lambda_2(p) < \lambda_3(p) < \lambda_4(p)(\lambda_1(p) = \lambda_2(p) < 0)\}.
    \end{aligned}
\end{equation}
Obviously,
$$
M^{4} = X \cup Y \cup Z.$$
If $Y = \varnothing $, then $K$ equals $a$ or $b$, and every point on $M^4$ has three distinct principal curvatures. By {\cite{Tongzhu Li}}, $M^4$ is isoparametric. 
But from {\cite{Cartan}} and {\cite{Mnzner}}, we know this is a contradiction as it is well known from Cartan’s classification result that isoparametric hypersurfaces of $\mathbb{S}^{n+1}$ with three distinct principal curvatures do exist only if $n = 3, 6, 12, 24$. Thus there is no such hypersurface in $\mathbb{S}^5$.

From now on, we will assume $Y \neq \varnothing $.

\subsection{Structure equations on  \texorpdfstring{$Y$}{Y}}
$\ $

\medskip
 
Locally, we choose an oriented orthonormal frame field $\{e_1,e_2,e_3,e_4 \}$ on $M^4$. 
Let $\{\omega_1, \omega_2, \omega_3, \omega_4\}$ be the dual frame. We can choose a proper coordinate system on $Y$ such that $(U, (\omega_1, \omega_2, \omega_3, \omega_4))$ is admissible. 
Namely, $(U, (\omega_1, \omega_2, \omega_3, \omega_4))$ satisfies
\begin{itemize}
    \item $(\omega_1, \omega_2, \omega_3, \omega_4)$ is a smooth orthonormal coframe field on an open subset $U$ of $Y$;
    \item $\omega_1 \wedge \omega_2 \wedge \omega_3 \wedge \omega_4 = \textup{vol}$, the volume form on $U$; 
    \item $h = \sum_{i=1}^{4} \lambda_i \omega_i \otimes \omega_i$.
\end{itemize}
Evidently, when $(U, (\omega_1, \omega_2, \omega_3, \omega_4))$ is admissible, the connection forms $\omega_{ij}$ on $U$ are uniquely determined and $h_{ij} = \lambda_i \delta_{ij}$.

We differentiate each $\lambda_i$ for $i=1,2,3,4$ to obtain the smooth 1-form $d\lambda_i$, which can be expressed by the metric form $\omega_k$ as
$$d\lambda_i = \sum_{j=1}^{4} \lambda_{ij} \omega_j,$$
where $\lambda_{ij}$'s are smooth functions on $U$. 
Besides, express the connection form $\omega_{ij}$ as
\begin{equation}  \label{w-ij}
    \omega_{ij} := \sum_{k=1}^{4} \varGamma_{ijk} \omega_k,
\end{equation}
where $\varGamma_{ijk} = \omega_{ij}(e_k)$ is the connection coefficient. 
Then it follows from \eqref{h_ijk} immediately that
\begin{align*}
    \sum_{k=1}^{4} h_{iik} \omega_k &= d\lambda_i = \sum_{k=1}^{4} \lambda_{ik} \omega_k, \quad \forall i=1,2,3,4, \\
    \sum_{k=1}^{4} h_{ijk} \omega_k &= (\lambda_i - \lambda_j) \omega_{ij} = (\lambda_i - \lambda_j) \sum_{k=1}^{4} \varGamma_{ijk} \omega_k, \quad \forall i \neq j.
\end{align*}
Therefore,
\begin{align}
    h_{iik} &= \lambda_{ik},  \label{h-iik} \\
    h_{ijk} &= (\lambda_i - \lambda_j) \varGamma_{ijk}, \quad \forall i \neq j. \label{h-ijk}
\end{align}

Differentiating the equations
\begin{equation}
\begin{cases}
\lambda_{1} + \lambda_{2} + \lambda_{3} + \lambda_{4} = 0, \\
\lambda_{1}^2 + \lambda_{2}^2 + \lambda_{3}^2 + \lambda_{4}^2 = S, \\
\lambda_{1}^3 + \lambda_{2}^3 + \lambda_{3}^3 + \lambda_{4}^3 = f_3, \\
\lambda_{1}^4 + \lambda_{2}^4 + \lambda_{3}^4 + \lambda_{4}^4 = f_4,
\end{cases}
\end{equation}
and noticing that $ f_4 = \frac{S^2}{2} - 4K$, we obtain that for each $j=1,2,3,4$,
\begin{equation}  \label{Vandermonde matrix}
\begin{pmatrix}
1 & 1 & 1 & 1 \\
\lambda_1 & \lambda_2 & \lambda_3 & \lambda_4 \\
\lambda_1^{2} & \lambda_2^{2} & \lambda_3^{2} & \lambda_4^{2} \\
\lambda_1^{3} & \lambda_2^{3} & \lambda_3^{3} & \lambda_4^{3}
\end{pmatrix}
\begin{pmatrix}
\lambda_{1j} \\
\lambda_{2j} \\
\lambda_{3j} \\
\lambda_{4j}
\end{pmatrix}
=
\begin{pmatrix}
0 \\
0 \\
0 \\
-K_j
\end{pmatrix},
\end{equation}
where $K_j$ is defined as follows:
$$dK = -\frac{df_4}{4} = -\sum_{j=1}^4 \left( \sum_{i=1}^4 \lambda_i^{3} \lambda_{ij} \right) \omega_j =: \sum_{j=1}^4 K_j \omega_j.$$
Denote the $4 \times 4$ Vandermonde matrix on the left-hand side of \eqref{Vandermonde matrix} by $D$. 
It is known that its determinant
$$
\gamma := \det D = \prod_{k,l=1; \; k > l}^4 (\lambda_k - \lambda_l) \neq 0.$$
Then it follows from the equation \eqref{Vandermonde matrix} that
\begin{equation}  \label{lambda-ij}
    \begin{aligned}
        \lambda_{ij} &= (-1)^{i+1} \frac{K_j}{\gamma} \prod_{k,l=1; \; k \neq i; k > l}^4 (\lambda_k - \lambda_l) \\
                     &= K_j \cdot \frac{1}{\prod_{k=1; k \neq i}^4 (\lambda_k - \lambda_i)}.
    \end{aligned}
\end{equation}

\subsection{The 3-form  \texorpdfstring{$\Phi$}{Phi}}
$\ $

\medskip

As in {\cite{Tongzhu Li}}, we define a 3-form $\Phi$ as follows:
$$\Phi = \sum_{i<j} (\lambda_i + \lambda_j)\theta_{ij},$$
where
$$\theta_{12}=\omega_3 \wedge \omega_4 \wedge \omega_{12},\ \  \theta_{13}=\omega_4 \wedge \omega_2 \wedge \omega_{13}, \ \  \theta_{14}=\omega_2 \wedge \omega_3 \wedge \omega_{14}, $$
$$\theta_{23}=\omega_1 \wedge \omega_4 \wedge \omega_{23},\ \ \theta_{24}=\omega_3 \wedge \omega_1 \wedge \omega_{24},\ \ \theta_{34}=\omega_1 \wedge \omega_2 \wedge \omega_{34}. $$
By {\cite{Tongzhu Li}}, we know that $\Phi$ is globally well defined on $Y$.

From {\cite{Tongzhu Li}}, we also have the differential of $\Phi$ as follows:
\begin{equation}  \label{d Phi}
    d\Phi = f_3(\sum_{i}c_i h_{44i}^{2} + 1) \cdot \text{vol} \quad \text{on}\ Y,
\end{equation}
where
$$c_i = \displaystyle\frac{2(3S-4\lambda_{i}^{2})}{3(\lambda_1 - \lambda_2)^{2}(\lambda_1 - \lambda_3)^{2}(\lambda_2 - \lambda_3)^{2}}.$$
By {\cite{Tongzhu Li}}, we know that $\sum_{i}c_i h_{44i}^{2} + 1 > 0$.
Then under our assumption $f_3 \geqslant  0$, we have
\begin{equation} \label{integral}
    \int_{Y} d\Phi \geqslant  0.
\end{equation}

Next, we are going to calculate $dK \wedge \Phi$.

From \eqref{w-ij} and \eqref{h-ijk}, it follows that
\begin{equation*} 
    \begin{aligned}
        dK \wedge \Phi = [&(\lambda_1 + \lambda_2)(K_1 \varGamma_{122} - K_2 \varGamma_{121} ) + (\lambda_1 + \lambda_3)(K_1 \varGamma_{133} - K_3 \varGamma_{131} ) \\
                       +&(\lambda_1 + \lambda_4)(K_1 \varGamma_{144} - K_4 \varGamma_{141} ) + (\lambda_2 + \lambda_3)(K_2 \varGamma_{233} - K_3 \varGamma_{232} ) \\
                       +&(\lambda_2 + \lambda_4)(K_2 \varGamma_{244} - K_4 \varGamma_{242} ) + (\lambda_3 + \lambda_4)(K_3 \varGamma_{344} - K_4 \varGamma_{343} ) ] \\
                       \cdot & \omega_1 \wedge \omega_2 \wedge \omega_3 \wedge \omega_4.
    \end{aligned}
\end{equation*}
By \eqref{h-iik}, \eqref{h-ijk} and \eqref{lambda-ij}, we have 
$$K_1 \varGamma_{122} - K_2 \varGamma_{121} = \displaystyle\frac{K_{1}^{2}}{(\lambda_1 - \lambda_2)^{2}(\lambda_3 - \lambda_2)(\lambda_4 - \lambda_2)} + \displaystyle\frac{K_{2}^{2}}{(\lambda_1 - \lambda_2)^{2}(\lambda_3 - \lambda_1)(\lambda_4 - \lambda_1)}, $$
$$K_1 \varGamma_{133} - K_3 \varGamma_{131} = \displaystyle\frac{K_{1}^{2}}{(\lambda_1 - \lambda_3)^{2}(\lambda_2 - \lambda_3)(\lambda_4 - \lambda_3)} + \displaystyle\frac{K_{3}^{2}}{(\lambda_1 - \lambda_3)^{2}(\lambda_2 - \lambda_1)(\lambda_4 - \lambda_1)}, $$
$$K_1 \varGamma_{144} - K_4 \varGamma_{141} = \displaystyle\frac{K_{1}^{2}}{(\lambda_1 - \lambda_4)^{2}(\lambda_2 - \lambda_4)(\lambda_3 - \lambda_4)} + \displaystyle\frac{K_{4}^{2}}{(\lambda_1 - \lambda_4)^{2}(\lambda_2 - \lambda_1)(\lambda_3 - \lambda_1)}, $$
$$K_2 \varGamma_{233} - K_3 \varGamma_{232} = \displaystyle\frac{K_{2}^{2}}{(\lambda_2 - \lambda_3)^{2}(\lambda_1 - \lambda_3)(\lambda_4 - \lambda_3)} + \displaystyle\frac{K_{3}^{2}}{(\lambda_2 - \lambda_3)^{2}(\lambda_1 - \lambda_2)(\lambda_4 - \lambda_2)}, $$
$$K_2 \varGamma_{244} - K_4 \varGamma_{242} = \displaystyle\frac{K_{2}^{2}}{(\lambda_2 - \lambda_4)^{2}(\lambda_1 - \lambda_4)(\lambda_3 - \lambda_4)} + \displaystyle\frac{K_{4}^{2}}{(\lambda_2 - \lambda_4)^{2}(\lambda_1 - \lambda_2)(\lambda_3 - \lambda_2)}, $$
$$K_3 \varGamma_{344} - K_4 \varGamma_{343} = \displaystyle\frac{K_{3}^{2}}{(\lambda_3 - \lambda_4)^{2}(\lambda_1 - \lambda_4)(\lambda_2 - \lambda_4)} + \displaystyle\frac{K_{4}^{2}}{(\lambda_3 - \lambda_4)^{2}(\lambda_1 - \lambda_3)(\lambda_2 - \lambda_3)}. $$
Therefore,
\begin{equation} \label{dK}
    dK \wedge \Phi = \sum_{i=1}^{4}u_i K_{i}^{2} \cdot \text{vol} \quad \text{on}\  Y,
\end{equation}
where
\begin{align*}u_1 
=& \displaystyle\frac{\lambda_1 + \lambda_2}{(\lambda_1 - \lambda_2)^{2}(\lambda_3 - \lambda_2)(\lambda_4 - \lambda_2)} + \displaystyle\frac{\lambda_1 + \lambda_3}{(\lambda_1 - \lambda_3)^{2}(\lambda_2 - \lambda_3)(\lambda_4 - \lambda_3)} \\
&+ \displaystyle\frac{\lambda_1 + \lambda_4}{(\lambda_1 - \lambda_4)^{2}(\lambda_2 - \lambda_4)(\lambda_3 - \lambda_4)}, 
\end{align*}

\begin{align*}
u_2 
=& \displaystyle\frac{\lambda_1 + \lambda_2}{(\lambda_1 - \lambda_2)^{2}(\lambda_3 - \lambda_1)(\lambda_4 - \lambda_1)} + \displaystyle\frac{\lambda_2 + \lambda_3}{(\lambda_2 - \lambda_3)^{2}(\lambda_1 - \lambda_3)(\lambda_4 - \lambda_3)} \\
&+ \displaystyle\frac{\lambda_2 + \lambda_4}{(\lambda_2 - \lambda_4)^{2}(\lambda_1 - \lambda_4)(\lambda_3 - \lambda_4)}, \end{align*}

\begin{align*} 
u_3 
=& \displaystyle\frac{\lambda_1 + \lambda_3}{(\lambda_1 - \lambda_3)^{2}(\lambda_2 - \lambda_1)(\lambda_4 - \lambda_1)} + \displaystyle\frac{\lambda_2 + \lambda_3}{(\lambda_2 - \lambda_3)^{2}(\lambda_1 - \lambda_2)(\lambda_4 - \lambda_2)}\\
&+ \displaystyle\frac{\lambda_3 + \lambda_4}{(\lambda_3 - \lambda_4)^{2}(\lambda_1 - \lambda_4)(\lambda_2 - \lambda_4)}, \end{align*}

\begin{align*}
u_4 
=& \displaystyle\frac{\lambda_1 + \lambda_4}{(\lambda_1 - \lambda_4)^{2}(\lambda_2 - \lambda_1)(\lambda_3 - \lambda_1)} + \displaystyle\frac{\lambda_2 + \lambda_4}{(\lambda_2 - \lambda_4)^{2}(\lambda_1 - \lambda_2)(\lambda_3 - \lambda_2)}\\ 
&+ \displaystyle\frac{\lambda_3 + \lambda_4}{(\lambda_3 - \lambda_4)^{2}(\lambda_1 - \lambda_3)(\lambda_2 - \lambda_3)}. \end{align*}

$ \ $

For $0 < \varepsilon < \frac{b - a}{2}$, we let
\begin{equation}
    \begin{aligned}
        X_{\varepsilon} &:= \{p \in M^{4}: a < K(p) < a + \varepsilon\}, \\
        Y_{\varepsilon} &:= \{p \in M^{4}: a + \varepsilon \leqslant K(p) \leqslant b - \varepsilon\}, \\
        Z_{\varepsilon} &:= \{p \in M^{4}: b - \varepsilon < K(p) < b\}
    \end{aligned}
\end{equation}
and infer from \eqref{set classification} that
$$Y = X_{\varepsilon} \cup Y_{\varepsilon} \cup Z_{\varepsilon}.$$

\begin{lemma}  \label{lem1}
    There exists a constant $A>0$ depending only on $S,f_3$, such that
    $$ u_i \geqslant  -A \quad \text{on } X_{\varepsilon} \quad and \quad u_i \leqslant   A \quad \text{on } Z_{\varepsilon}. $$
\end{lemma}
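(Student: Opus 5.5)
The idea is that each $u_i$ is a rational function of $\lambda_1,\dots,\lambda_4$, and on $Y$ these are the four distinct roots of $p(\lambda)=p_0(\lambda)+K$. Since $p_0$ depends only on $S,f_3$, the ordered roots $\lambda_1(K)<\lambda_2(K)<\lambda_3(K)<\lambda_4(K)$ are continuous functions of $K\in(a,b)$ determined by $S,f_3$ alone. So every estimate is really an estimate for functions of the single variable $K$. A singularity of $u_i$ can only arise when two roots collide, which happens only as $K\to a$ (where $\lambda_2=\lambda_3$) or $K\to b$ (where $\lambda_1=\lambda_2$). I will treat $X_\varepsilon$ and $Z_\varepsilon$ separately and, in each, identify the singular terms and show they have a favourable sign.

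\textbf{Step 1 (compactness away from the ends).} At $K=a$ we know $\lambda_1<\lambda_2=\lambda_3<0<\lambda_4$. By continuity of roots there is $\delta>0$, depending only on $S,f_3$, such that for $K\in(a,a+\delta]$:
\begin{itemize}
\item the gaps $\lambda_2-\lambda_1$, $\lambda_4-\lambda_3$, $\lambda_4-\lambda_2$, $\lambda_3-\lambda_1$ are bounded below by a positive constant;
\item $\lambda_2+\lambda_3<0$.
\end{itemize}
The same holds near $K=b$ for $\lambda_1+\lambda_2<0$ and the gaps other than $\lambda_2-\lambda_1$. On the compact interval $K\in[a+\delta,b-\delta]$ all four roots are distinct with gaps bounded below, so each $|u_i|$ is bounded by a constant depending only on $S,f_3$. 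Since $\varepsilon<\frac{b-a}{2}$, we have $X_\varepsilon\subset\{a<K\le \frac{a+b}{2}\}$. Hence it suffices to treat $K\in(a,a+\delta)$ for $X_\varepsilon$, and symmetrically $K\in(b-\delta,b)$ for $Z_\varepsilon$.

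\textbf{Step 2 (near $K=a$).} I examine which terms contain $\lambda_2-\lambda_3$ in a denominator.

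For $u_1$, the first two terms combine into
$$\frac{g(\lambda_2)-g(\lambda_3)}{\lambda_3-\lambda_2},\qquad g(x)=\frac{\lambda_1+x}{(\lambda_1-x)^2(\lambda_4-x)}.$$
The function $g$ is smooth with bounded derivative on the relevant range, because $\lambda_1,\lambda_4$ stay uniformly away from $\lambda_2,\lambda_3$. So this difference quotient is bounded by the mean value theorem. The third term of $u_1$ is bounded, so $u_1$ is bounded. The same difference-quotient argument bounds $u_4$.

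For $u_2$, the only singular term is
$$\frac{\lambda_2+\lambda_3}{(\lambda_2-\lambda_3)^2(\lambda_1-\lambda_3)(\lambda_4-\lambda_3)}.$$
Here $\lambda_2+\lambda_3<0$ and $(\lambda_1-\lambda_3)(\lambda_4-\lambda_3)<0$, so this term is positive. The remaining terms are bounded, which gives $u_2\ge -A$.

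For $u_3$, the singular term is
$$\frac{\lambda_2+\lambda_3}{(\lambda_2-\lambda_3)^2(\lambda_1-\lambda_2)(\lambda_4-\lambda_2)}.$$
Its denominator factor $(\lambda_1-\lambda_2)(\lambda_4-\lambda_2)$ is negative and its numerator is negative, so this term is again positive and $u_3\ge -A$.

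\textbf{Step 3 (near $K=b$).} Here $\lambda_1=\lambda_2<0$ at $K=b$, so $\lambda_1+\lambda_2<0$ nearby. In $u_3$ and $u_4$ the terms with $\lambda_1-\lambda_2$ in a simple denominator pair up into difference quotients of smooth functions, exactly as in Step 2, and are therefore bounded.

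In $u_1$, the singular term is
$$\frac{\lambda_1+\lambda_2}{(\lambda_1-\lambda_2)^2(\lambda_3-\lambda_2)(\lambda_4-\lambda_2)},$$
which is negative. In $u_2$, the singular term is
$$\frac{\lambda_1+\lambda_2}{(\lambda_1-\lambda_2)^2(\lambda_3-\lambda_1)(\lambda_4-\lambda_1)},$$
which is also negative. All other terms are bounded, so $u_1,u_2\le A$ on $Z_\varepsilon$.

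The main obstacle is bookkeeping: one must check carefully that the simple-pole terms really pair into difference quotients (the signs of $\lambda_3-\lambda_2$ versus $\lambda_2-\lambda_3$ must match), and that the uniformity in $\varepsilon$ is genuinely controlled by $S,f_3$ only. The latter follows from the one-variable dependence on $K$ established in Step 1.
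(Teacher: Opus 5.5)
Your proof is correct and follows the paper's structure. You use the same sign argument for the singular terms ($u_2,u_3$ near $K=a$, $u_1,u_2$ near $K=b$) and obtain boundedness of the other $u_i$ by cancelling the paired simple poles; the only differences are that you carry out this cancellation by a mean-value difference quotient rather than the paper's explicit algebraic rewriting with $\sum_i\lambda_i=0$ and $S$, and that you spell out the compactness argument on $[a+\delta,b-\delta]$ giving a constant uniform in $\varepsilon$, which the paper leaves implicit.
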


\begin{proof}
Let $K^{-1}(a)$ denote the set of $(\alpha _1,\alpha_2,\alpha _3,\alpha_4)\in \mathbb{R}^4$, where $\alpha_1\leqslant\alpha_2\leqslant\alpha_3\leqslant\alpha_4$ are principal curvatures at some point in $X$ such that $\prod_{i=1}^4\alpha_i=a$.

From previous discussions, when $K\rightarrow a$, it happens that
$$\alpha_1 < \alpha_2 = \alpha_3 < 0 < \alpha_4.$$

Obviously, $u_2,u_3\rightarrow +\infty $ as $K\rightarrow a$. 

In addition, by $\lambda_1 + \lambda_2 + \lambda_3 + \lambda_4 = 0$, we have
\begin{align*}
    u_1 =& \displaystyle\frac{\lambda_1 + \lambda_2}{(\lambda_1 - \lambda_2)^{2}(\lambda_3 - \lambda_2)(\lambda_4 - \lambda_2)} + \displaystyle\frac{\lambda_1 + \lambda_3}{(\lambda_1 - \lambda_3)^{2}(\lambda_2 - \lambda_3)(\lambda_4 - \lambda_3)} \\
    &+ \displaystyle\frac{\lambda_1 + \lambda_4}{(\lambda_1 - \lambda_4)^{2}(\lambda_2 - \lambda_4)(\lambda_3 - \lambda_4)} \\
        =& \displaystyle\frac{\lambda_4 + \lambda_3}{(\lambda_1 - \lambda_2)^{2}(\lambda_2 - \lambda_3)(\lambda_4 - \lambda_2)} - \displaystyle\frac{\lambda_4 + \lambda_2}{(\lambda_1 - \lambda_3)^{2}(\lambda_2 - \lambda_3)(\lambda_4 - \lambda_3)}\\
        &+ \displaystyle\frac{\lambda_1 + \lambda_4}{(\lambda_1 - \lambda_4)^{2}(\lambda_2 - \lambda_4)(\lambda_3 - \lambda_4)} \\[4pt]
        =& \displaystyle\frac{(\lambda_1 - \lambda_3)^{2}(\lambda_{4}^{2}-\lambda_{3}^{2}) - (\lambda_1 - \lambda_2)^{2}(\lambda_{4}^{2}-\lambda_{2}^{2})}{(\lambda_1 - \lambda_2)^{2}(\lambda_1 - \lambda_3)^{2}(\lambda_4 - \lambda_2)(\lambda_4 - \lambda_3)(\lambda_2 - \lambda_3)}  \\
        &+ \displaystyle\frac{\lambda_1 + \lambda_4}{(\lambda_1 - \lambda_4)^{2}(\lambda_2 - \lambda_4)(\lambda_3 - \lambda_4)} \\[4pt]
        =& \displaystyle\frac{(\lambda_2 + \lambda_3)(S-2\lambda_{4}^{2}) + 2\lambda_{1}(\lambda_{4}^{2}-\lambda_{2}^{2}-\lambda_{3}^{2}-\lambda_2 \lambda_3)}{(\lambda_1 - \lambda_2)^{2}(\lambda_1 - \lambda_3)^{2}(\lambda_4 - \lambda_2)(\lambda_4 - \lambda_3)}  \\
        &+ \displaystyle\frac{\lambda_1 + \lambda_4}{(\lambda_1 - \lambda_4)^{2}(\lambda_2 - \lambda_4)(\lambda_3 - \lambda_4)}, 
  \end{align*}

\begin{align*}
  u_4 =& \displaystyle\frac{\lambda_1 + \lambda_4}{(\lambda_1 - \lambda_4)^{2}(\lambda_2 - \lambda_1)(\lambda_3 - \lambda_1)} + \displaystyle\frac{\lambda_2 + \lambda_4}{(\lambda_2 - \lambda_4)^{2}(\lambda_1 - \lambda_2)(\lambda_3 - \lambda_2)} \\
       &+ \displaystyle\frac{\lambda_3 + \lambda_4}{(\lambda_3 - \lambda_4)^{2}(\lambda_1 - \lambda_3)(\lambda_2 - \lambda_3)} \\
      =& \displaystyle\frac{\lambda_1 + \lambda_4}{(\lambda_1 - \lambda_4)^{2}(\lambda_2 - \lambda_1)(\lambda_3 - \lambda_1)} + \displaystyle\frac{\lambda_1 + \lambda_3}{(\lambda_2 - \lambda_4)^{2}(\lambda_1 - \lambda_2)(\lambda_2 - \lambda_3)} \\
      &- \displaystyle\frac{\lambda_1 + \lambda_2}{(\lambda_3 - \lambda_4)^{2}(\lambda_1 - \lambda_3)(\lambda_2 - \lambda_3)} \\
      =& \displaystyle\frac{\lambda_1 + \lambda_4}{(\lambda_1 - \lambda_4)^{2}(\lambda_2 - \lambda_1)(\lambda_3 - \lambda_1)}\\[4pt]
      &+ \displaystyle\frac{(\lambda_3 - \lambda_4)^{2}(\lambda_{1}^{2}-\lambda_{3}^{2}) - (\lambda_2 - \lambda_4)^{2}(\lambda_{1}^{2}-\lambda_{2}^{2})}{(\lambda_2 - \lambda_4)^{2}(\lambda_3 - \lambda_4)^{2}(\lambda_1 - \lambda_2)(\lambda_1 - \lambda_3)(\lambda_2 - \lambda_3)}  \\
      =& \displaystyle\frac{\lambda_1 + \lambda_4}{(\lambda_1 - \lambda_4)^{2}(\lambda_2 - \lambda_1)(\lambda_3 - \lambda_1)} \\[4pt]
      &+ \displaystyle\frac{(\lambda_2 + \lambda_3)(S-2\lambda_{1}^{2}) + 2\lambda_{4}(\lambda_{1}^{2}-\lambda_{2}^{2}-\lambda_{3}^{2}-\lambda_2 \lambda_3)}{(\lambda_2 - \lambda_4)^{2}(\lambda_3 - \lambda_4)^{2}(\lambda_1 - \lambda_2)(\lambda_1 - \lambda_3)}. 
\end{align*}

It is known that when $K \to a$, we have 
  \begin{align*}
    u_1 &\to \frac{2\alpha_2 (S-2\alpha_{4}^{2}) + 2\alpha_{1}(\alpha_{4}^{2}-3\alpha_{2}^{2})}{(\alpha_1 - \alpha_2)^{4}(\alpha_4 - \alpha_2)^{2}} 
        + \frac{\alpha_1 + \alpha_4}{(\alpha_1 - \alpha_4)^{2}(\alpha_2 - \alpha_4)^{2}}, \\
    u_4 &\to \frac{\alpha_1 + \alpha_4}{(\alpha_1 - \alpha_4)^{2}(\alpha_2 - \alpha_1)^{2}} 
        + \frac{2\alpha_2 (S-2\alpha_{1}^{2}) + 2\alpha_{4}(\alpha_{1}^{2}-3\alpha_{2}^{2})}{(\alpha_2 - \alpha_4)^{4}(\alpha_1 - \alpha_2)^{2}}.
  \end{align*}
So, $u_1,u_4$ are bounded as $K\rightarrow a$. 
Therefore, there exists a constant $A_1 >0$, such that $ u_i \geqslant -A_1 $ on $X_{\varepsilon}$.

Similarly, let $K^{-1}(b)$ denote the set of $(\beta_1,\beta_2,\beta_3,\beta_4)\in \mathbb{R}^4$, where $\beta_1\leqslant\beta_2\leqslant\beta_3\leqslant\beta_4$ are principal curvatures at some point in $Z$ such that $\prod_{i=1}^4\beta_i=b$.

From previous discussions, when $K\rightarrow b$ from below, it only happens that
$$\beta_1 = \beta_2 < \beta_3 < \beta_4,\ \ \beta_1 = \beta_2 < 0.$$

Obviously, $u_1,u_2\rightarrow -\infty $ as $K\rightarrow b$. 

In addition, by $\lambda_1 + \lambda_2 + \lambda_3 + \lambda_4 = 0$, we have
\begin{align*}
    u_3 =& \displaystyle\frac{\lambda_1 + \lambda_3}{(\lambda_1 - \lambda_3)^{2}(\lambda_2 - \lambda_1)(\lambda_4 - \lambda_1)} + \displaystyle\frac{\lambda_2 + \lambda_3}{(\lambda_2 - \lambda_3)^{2}(\lambda_1 - \lambda_2)(\lambda_4 - \lambda_2)} \\
    &+ \displaystyle\frac{\lambda_3 + \lambda_4}{(\lambda_3 - \lambda_4)^{2}(\lambda_1 - \lambda_4)(\lambda_2 - \lambda_4)} \\
        =& \displaystyle\frac{\lambda_4 + \lambda_2}{(\lambda_1 - \lambda_3)^{2}(\lambda_1 - \lambda_2)(\lambda_4 - \lambda_1)} - \displaystyle\frac{\lambda_4 + \lambda_1}{(\lambda_2 - \lambda_3)^{2}(\lambda_1 - \lambda_2)(\lambda_4 - \lambda_2)}\\
        &+ \displaystyle\frac{\lambda_3 + \lambda_4}{(\lambda_3 - \lambda_4)^{2}(\lambda_1 - \lambda_4)(\lambda_2 - \lambda_4)} \\[4pt]
        =& \displaystyle\frac{(\lambda_2 - \lambda_3)^{2}(\lambda_{4}^{2}-\lambda_{2}^{2}) - (\lambda_1 - \lambda_3)^{2}(\lambda_{4}^{2}-\lambda_{1}^{2})}{(\lambda_1 - \lambda_3)^{2}(\lambda_2 - \lambda_3)^{2}(\lambda_4 - \lambda_1)(\lambda_4 - \lambda_2)(\lambda_1 - \lambda_2)} \\
        &+ \displaystyle\frac{\lambda_3 + \lambda_4}{(\lambda_3 - \lambda_4)^{2}(\lambda_1 - \lambda_4)(\lambda_2 - \lambda_4)} \\[4pt]
        =& \displaystyle\frac{(\lambda_1 + \lambda_2)(S-2\lambda_{4}^{2}) + 2\lambda_{3}(\lambda_{4}^{2}-\lambda_{1}^{2}-\lambda_{2}^{2}-\lambda_1 \lambda_2)}{(\lambda_1 - \lambda_3)^{2}(\lambda_2 - \lambda_3)^{2}(\lambda_4 - \lambda_1)(\lambda_4 - \lambda_2)}  \\
        &+ \displaystyle\frac{\lambda_3 + \lambda_4}{(\lambda_3 - \lambda_4)^{2}(\lambda_1 - \lambda_4)(\lambda_2 - \lambda_4)},
\end{align*}

\begin{align*}
    u_4 =& \displaystyle\frac{\lambda_1 + \lambda_4}{(\lambda_1 - \lambda_4)^{2}(\lambda_2 - \lambda_1)(\lambda_3 - \lambda_1)} + \displaystyle\frac{\lambda_2 + \lambda_4}{(\lambda_2 - \lambda_4)^{2}(\lambda_1 - \lambda_2)(\lambda_3 - \lambda_2)} \\
    &+ \displaystyle\frac{\lambda_3 + \lambda_4}{(\lambda_3 - \lambda_4)^{2}(\lambda_1 - \lambda_3)(\lambda_2 - \lambda_3)} \\
        =& \displaystyle\frac{\lambda_3 + \lambda_2}{(\lambda_1 - \lambda_4)^{2}(\lambda_1 - \lambda_2)(\lambda_3 - \lambda_1)} - \displaystyle\frac{\lambda_3 + \lambda_1}{(\lambda_2 - \lambda_4)^{2}(\lambda_1 - \lambda_2)(\lambda_3 - \lambda_2)}\\
        &+ \displaystyle\frac{\lambda_3 + \lambda_4}{(\lambda_3 - \lambda_4)^{2}(\lambda_1 - \lambda_3)(\lambda_2 - \lambda_3)} \\[4pt]
        =& \displaystyle\frac{(\lambda_2 - \lambda_4)^{2}(\lambda_{3}^{2}-\lambda_{2}^{2}) - (\lambda_1 - \lambda_4)^{2}(\lambda_{3}^{2}-\lambda_{1}^{2})}{(\lambda_1 - \lambda_4)^{2}(\lambda_2 - \lambda_4)^{2}(\lambda_3 - \lambda_1)(\lambda_3 - \lambda_2)(\lambda_1 - \lambda_2)}  \\
        &+ \displaystyle\frac{\lambda_3 + \lambda_4}{(\lambda_3 - \lambda_4)^{2}(\lambda_1 - \lambda_3)(\lambda_2 - \lambda_3)} \\[4pt]
        =& \displaystyle\frac{(\lambda_1 + \lambda_2)(S-2\lambda_{3}^{2}) + 2\lambda_{4}(\lambda_{3}^{2}-\lambda_{1}^{2}-\lambda_{2}^{2}-\lambda_1 \lambda_2)}{(\lambda_1 - \lambda_4)^{2}(\lambda_2 - \lambda_4)^{2}(\lambda_3 - \lambda_1)(\lambda_3 - \lambda_2)}  \\
        &+ \displaystyle\frac{\lambda_3 + \lambda_4}{(\lambda_3 - \lambda_4)^{2}(\lambda_1 - \lambda_3)(\lambda_2 - \lambda_3)}.
\end{align*}

It is known that when $K \to b$, we have 
\begin{align*}
  u_3 &\to \frac{2\beta_1 (S-2\beta_{4}^{2}) + 2\beta_{3}(\beta_{4}^{2}-3\beta_{1}^{2})}{(\beta_1 - \beta_3)^{4}(\beta_1 - \beta_4)^{2}} 
      + \frac{\beta_3 + \beta_4}{(\beta_3 - \beta_4)^{2}(\beta_1 - \beta_4)^{2}}, \\[4pt]
  u_4 &\to \frac{2\beta_1 (S-2\beta_{3}^{2}) + 2\beta_{4}(\beta_{3}^{2}-3\beta_{1}^{2})}{(\beta_1 - \beta_4)^{4}(\beta_1 - \beta_3)^{2}} 
      + \frac{\beta_3 + \beta_4}{(\beta_3 - \beta_4)^{2}(\beta_1 - \beta_3)^{2}}.
\end{align*}
So, $u_3,u_4$ is a finite value as $K\rightarrow b$. 
Therefore, there exists a constant $A_2 >0$, such that $ u_i \leqslant  A_2 $ on $Z_{\varepsilon}$. Then we take
$$A := \max\{A_1, A_2\},$$
and the lemma follows.
\end{proof}

\subsection{Proof of Theorem \ref{mthm}}

$\ \ $

\medskip

As we remarked before $Y \neq \varnothing $, for any smooth function $\eta : (a, b) \to \mathbb{R}$ with compact support, we apply Stokes' theorem to
$$d((\eta \circ K)\Phi) = (\eta \circ K)d\Phi + (\eta' \circ K)dK \wedge \Phi$$
to obtain
\begin{equation}  \label{Stokes1}
    \int_Y (\eta \circ K)d\Phi + \int_Y (\eta' \circ K)dK \wedge \Phi = 0.
\end{equation}

Given a small positive $\varepsilon$, we choose a smooth function $\eta_\varepsilon : \mathbb{R} \to \mathbb{R}$ such that

(1) $0 \leqslant \eta_\varepsilon \leqslant 1$;

(2) $\eta_\varepsilon(t) = 0$ for $t \leqslant a + \frac{\varepsilon}{3} $ or $t \geqslant b - \frac{\varepsilon}{3} $;

(3) $\eta_\varepsilon(t) = 1$ for $a + \varepsilon \leqslant t \leqslant b - \varepsilon$;

(4) $\eta'_\varepsilon \geqslant 0$ on $(a + \frac{\varepsilon}{3}, a + \varepsilon)$ and $\eta'_\varepsilon \leqslant 0$ on $(b - \varepsilon, b - \frac{\varepsilon}{3})$.

It follows from \eqref{integral}, \eqref{dK}, \eqref{Stokes1} and Lemma \ref{lem1} that
\begin{equation}  \label{Scaling inequality}
    \begin{aligned}
        0 &\leqslant \int_Y (\eta_\varepsilon \circ K)d\Phi \\
          &= -\int_Y (\eta'_\varepsilon \circ K)dK \wedge \Phi \\
          &= -\int_Y (\eta'_\varepsilon \circ K)\sum_{i=1}^4 u_iK_i^2 \cdot \text{vol} \\
          &\leqslant A \cdot \int_Y |\eta'_\varepsilon \circ K| \cdot |dK|^2 \cdot \text{vol},
    \end{aligned}
\end{equation}
where the second inequality is because Lemma \ref{lem1} and conditions (3) and (4) implying
$$-(\eta'_\varepsilon \circ K)\sum_{i=1}^4 u_iK_i^2 \leqslant A \cdot |\eta'_\varepsilon \circ K| \cdot |dK|^2 \quad \text{on} \ X_{\varepsilon} \cup Z_{\varepsilon}$$
and 
$$-(\eta'_\varepsilon \circ K)\sum_{i=1}^4 u_iK_i^2 = 0 \quad \text{on} \ Y_{\varepsilon}.$$

On the other hand, for any smooth function $\xi: \mathbb{R} \to \mathbb{R}$, we may apply Stokes' theorem to
$$d^* ((\xi \circ K) dK) = (\xi' \circ K) |dK|^2 \cdot \mathrm{vol} + (\xi \circ K) \Delta K \cdot \mathrm{vol}$$
to obtain
\begin{equation}  \label{Stokes2}
    \int_{M^4} (\xi' \circ K) |dK|^2 \cdot \mathrm{vol} + \int_{M^4} (\xi \circ K) \Delta K \cdot \mathrm{vol} = 0.
\end{equation}

Let $\xi_\varepsilon: \mathbb{R} \to \mathbb{R}$ be the smooth function given by
$$\xi_\varepsilon = 
\begin{cases} 
    \eta_\varepsilon - 1 & \text{on } \left( -\infty, \frac{a+b}{2} \right], \\ 
    1 - \eta_\varepsilon & \text{on } \left[ \frac{a+b}{2}, +\infty \right). 
\end{cases}$$

Note that $\xi_\varepsilon' = |\eta_\varepsilon'|$. It follows from \eqref{Stokes2} that
\begin{align*}
    \int_Y |\eta_\varepsilon' \circ K| \cdot |dK|^2 \cdot \mathrm{vol} 
    & = - \int_{M^4} (\xi_\varepsilon \circ K) \Delta K \cdot \mathrm{vol} \\
    & \leqslant \int_{M^4} |\xi_\varepsilon \circ K| \cdot |\Delta K| \cdot \mathrm{vol}.
\end{align*}

By construction $|\xi_\varepsilon| \leqslant 1$ and $\xi_\varepsilon \circ K = 0$ on $Y_\varepsilon$.

Next, we need the following lemma which was proved in \cite{de Almeida} for $n=3$ and still holds for $n=4$.

\begin{lemma} \label{lemma of de Almeida}
    Let $u: M^4 \to \mathbb{R}$ be a smooth function and $m= \min_{M^4}u$. If $D_{\varepsilon}=u^{-1}([m,m+\varepsilon])$, then
    $$\lim_{\varepsilon \to 0} \int_{D_{\varepsilon}} |\Delta u| \mathrm{vol} = 0.$$
    In particular, 
    $$\lim_{\varepsilon \to 0} \int_{M^4 - Y_\varepsilon} |\Delta K| \cdot \mathrm{vol} = 0, \quad \text{if } X \cup Z \neq \varnothing .$$
\end{lemma}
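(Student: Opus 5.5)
We need to prove: for a smooth $u$ on closed $M^4$ with minimum $m$, $\int_{D_\varepsilon}|\Delta u|\to 0$ as $\varepsilon\to 0$.

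The plan is to split $D_\varepsilon$ into the minimum set and its complement. Set $D_0=u^{-1}(m)=\bigcap_{\varepsilon>0}D_\varepsilon$. Since the sets $D_\varepsilon$ are decreasing and $|\Delta u|$ is bounded on the compact manifold $M^4$, dominated convergence gives
$$\lim_{\varepsilon\to0}\int_{D_\varepsilon}|\Delta u|\,\mathrm{vol}=\int_{D_0}|\Delta u|\,\mathrm{vol}.$$
So it suffices to show that $\Delta u=0$ almost everywhere on $D_0$.

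To do this, decompose $D_0$ into two parts. Let $D_0'$ be the set of points of $D_0$ that are interior points of $D_0$, and let $D_0''=D_0\setminus D_0'$.

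\emph{The interior part.} On the open set $\operatorname{int}D_0$ the function $u$ is constant, so $\Delta u=0$ there.

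\emph{The remaining part.} Every point of $D_0$ is a minimum point, so $du=0$ on $D_0$. Consider the Hessian $\nabla^2u$ at points of $D_0''$, where it is positive semidefinite. Work locally, in coordinates where each partial derivative $\partial_i u$ is a smooth function. For each $i$, the function $\partial_i u$ vanishes on $D_0$. We use the standard fact that for any $C^1$ function $g$, its gradient vanishes at almost every point of the zero set $g^{-1}(0)$. One proof runs as follows. At a point of $g^{-1}(0)$ with $dg\neq0$, the implicit function theorem shows that $g^{-1}(0)$ is locally a $C^1$ hypersurface, hence has measure zero. So almost every point of the zero set has $dg=0$.

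Apply this to $g=\partial_i u$ for $i=1,\dots,4$. At almost every point of $D_0$ we get $\partial_j\partial_i u=0$ for all $i,j$. Since $du=0$ on $D_0$, the Christoffel terms in the Hessian also drop out, so the full Hessian vanishes there. Hence $\Delta u=\operatorname{tr}\nabla^2u=0$ almost everywhere on $D_0$. This gives $\int_{D_0}|\Delta u|=0$, and the first claim follows. With this argument we do not even need the decomposition into $D_0'$ and $D_0''$; the zero-set fact handles all of $D_0$ at once.

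For the ``in particular'', note that
$$M^4-Y_\varepsilon=K^{-1}([a,a+\varepsilon))\cup K^{-1}((b-\varepsilon,b]).$$
Apply the first part to $u=K$, with minimum $a$, and to $u=-K$, with minimum $-b$. Since $|\Delta(-K)|=|\Delta K|$, each piece has integral tending to $0$. If one of $X$ or $Z$ is empty, then the corresponding extremum of $K$ is not attained at that level. In that case the corresponding piece is empty for small $\varepsilon$, because $\operatorname{Im}K$ is compact and $K$ actually attains its minimum and maximum.

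The only genuinely nontrivial step is the measure-zero fact, which is elementary. The dimension plays no role, which is why the $n=3$ argument of de Almeida--Brito carries over to $n=4$ verbatim.
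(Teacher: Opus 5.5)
Your proof is correct. The paper gives no argument of its own here: it cites de Almeida--Brito's $n=3$ proof and asserts that it carries over to $n=4$. Your proof supplies this in a self-contained way. Dominated convergence on the compact $M^4$ reduces the claim to $\int_{u^{-1}(m)}|\Delta u|\,\mathrm{vol}=0$. The coordinate derivatives $\partial_i u$ vanish on the minimum set, and a $C^1$ function has vanishing differential almost everywhere on its zero set. Applying this to each $\partial_i u$ kills all second partials a.e. on $u^{-1}(m)$, and the Christoffel terms drop because $du=0$ there. So the Hessian, and hence $\Delta u$, vanishes a.e. on $u^{-1}(m)$. This shows explicitly that the dimension plays no role, which the paper's phrase ``still holds for $n=4$'' leaves unjustified.

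Two small points of presentation. In the measure-zero step, say explicitly that $\{g=0,\ dg\neq 0\}$ is covered by countably many neighborhoods in which $g^{-1}(0)$ is a $C^1$ graph (Lindel\"of). Also, the preliminary splitting $D_0=D_0'\cup D_0''$ and the remark about positive semidefiniteness of the Hessian are unused, as you note yourself, and can be deleted.

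Your handling of the ``in particular'' clause is also slightly sharper than the statement. Since $\text{Im}K\subset[a,b]$, you write $M^4-Y_\varepsilon$ as $K^{-1}([a,a+\varepsilon))\cup K^{-1}((b-\varepsilon,b])$ and apply the first part to $K$ and to $-K$. When a level $a$ or $b$ is not attained, the corresponding piece is empty for small $\varepsilon$. So the hypothesis $X\cup Z\neq\varnothing$ is not actually needed.
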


Then due to Lemma \ref{lemma of de Almeida}, we obtain
\begin{align*}
  \lim_{\varepsilon \to 0} \int_{M^4} |\xi_\varepsilon \circ K| \cdot |\Delta K| \cdot \mathrm{vol} 
  &= \lim_{\varepsilon \to 0} \int_{M^4 - Y_\varepsilon} |\xi_\varepsilon \circ K| \cdot |\Delta K| \cdot \mathrm{vol} \\
  &\leqslant \lim_{\varepsilon \to 0} \int_{M^4 - Y_\varepsilon} |\Delta K| \cdot \mathrm{vol} \\
  &= 0,
\end{align*}
and thus,
$$\lim_{\varepsilon\to0}\int_{Y}|\eta_{\varepsilon}' \circ K|\cdot|dK|^{2}\cdot\mathrm{vol}=0.$$
Combining \eqref{Scaling inequality}, we have
$$
\lim_{\varepsilon\to0}\int_{Y}(\eta_{\varepsilon}\circ K)d\Phi=0.$$

At last, \eqref{d Phi} leads to
$$0\leqslant\int_{Y_{\varepsilon^{\prime}}}f_3(\sum_{i}c_i h_{44i}^{2} + 1) \cdot \text{vol}\leqslant\int_{Y}(\eta_{\varepsilon}\circ K)d\Phi$$
for all $0<\varepsilon\leqslant\varepsilon^{\prime}<\frac{b-a}{2}$, it follows that $f_3=0$ on $Y$. Furthermore, $f_3=0$ on $M^4$. 

Thus, by Theorem \ref{thm-Deng}, the proof of the main theorem is now complete.

\section*{Acknowledgments}

    This research was supported by the National Natural Science Foundation of China, Grant Nos. 12471051, 12171423, and 12071424.

\end{document}